\documentclass[12pt]{amsart}

\usepackage{amssymb,amsthm}

\newtheorem{theorem}{Theorem}[section]
\newtheorem{corollary}[theorem]{Corollary}

\newtheorem{lemma}[theorem]{Lemma}


\raggedbottom

\def\ker#1{{\rm ker} (#1)}


\begin{document}

\title[Semi-extraspecial groups]{Semi-extraspecial groups with an abelian subgroup of maximal possible order}
\author[Mark L. Lewis]{Mark L. Lewis}

\address{Department of Mathematical Sciences, Kent State University, Kent, OH 44242}
\email{lewis@math.kent.edu}

\subjclass[2010]{ 20D15 }
\keywords{semi-extraspecial groups, ultraspecial groups, semifields}

\begin{abstract}
Let $p$ be a prime.  A $p$-group $G$ is defined to be semi-extraspecial if for every maximal subgroup $N$ in $Z(G)$ the quotient $G/N$ is a an extraspecial group.  In addition, we say that $G$ is ultraspecial if $G$ is semi-extraspecial and $|G:G'| = |G'|^2$.  In this paper, we prove that every $p$-group of nilpotence class $2$ is isomorphic to a subgroup of some ultraspecial group.  Given a prime $p$ and a positive integer $n$, we provide a framework to construct of all the ultraspecial groups order $p^{3n}$ that contain an abelian subgroup of order $p^{2n}$.  In the literature, it has been proved that every ultraspecial group $G$ order $p^{3n}$ with at least two abelian subgroups of order $p^{2n}$ can be associated to a semifield.  We provide a generalization of semifield, and then we show that every semi-extraspecial group $G$ that is the product of two abelian subgroups can be associated with this generalization of semifield.
\end{abstract}

\maketitle

\section{Introduction}

Let $p$ be a prime, and let $G$ be a $p$-group.  We say that $G$ is {\it semi-extraspecial} if for every subgroup $N$ having index $p$ in $Z(G)$, then $G/N$ is an extraspecial group.  This definition seems to have originated by Beisiegel in \cite{beis}.  Also following Beisiegel, we say that a semi-extraspecial group $G$ is {\it ultraspecial} if $|G'| = |G:G'|^{1/2}$.  If $G$ is an ultraspecial group, then there is a positive integer $n$ so that $|G| = p^{3n}$.  We will say that $G$ is ultraspecial group of degree $n$.  We will often refer to semi-extraspecial groups as s.e.s. groups.

In our recent expository paper, \cite{semi}, we collect many of the known results regarding semi-extraspecial and ultraspecial groups and present them in a unified fashion.  We refer the reader to that paper for background and references for these groups.  For this paper, one key result that was presented in \cite{semi} is the following result of Verardi that any abelian subgroup of a s.e.s. group $G$ has order at most $|G:G'|^{1/2} |G'|$ (Theorem 1.8 of \cite{ver}).  Notice that when $G$ is an ultraspecial group, this bound is $|G:G'|$.  When we mention an abelian subgroup of maximal possible order, we mean an abelian subgroup of $G$ which has order $|G:G'|^{1/2} |G'|$ when $G$ is a s.e.s. group and $|G'|$ when $G$ is ultraspecial.

In the second half of \cite{semi}, we focused on the case when $G$ is an ultraspecial group with at least two abelian subgroups of order $|G:G'|$.  That paper gives references to a number of results that show when $G$ is a ultraspecial group that has at least two abelian subgroups of order $|G:G'|$ which both have exponent $p$, then $G$ can be identified with a finite semifield of order $|G'|$.  In Section \ref{semifield groups}, we will give details on the construction of semifield groups and we will detail the relationship between semifield groups and ultraspecial groups with at least two abelian subgroups of the maximal possible order.


In the main theorem of this paper, Theorem \ref{one}, we present a generalization of the construction of semifield groups.  We will see that this construction has a number of profound consequences.  One application of this construction shows that the class of ultraspecial groups is ``large.''  This is the content of the following theorem.

\begin{theorem} \label{maina}
Let $p$ be an odd prime.  If $H$ is a $p$-group of nilpotence class $2$ and exponent $p$, then there is an ultraspecial group $G$ that has a subgroup isomorphic to $H$.
\end{theorem}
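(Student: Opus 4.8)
The plan is to realize $H$ inside the ultraspecial group produced by Theorem \ref{one} from a finite field together with a prescribed alternating form. Write $A = H/H'$ and $B = H'$, regarded as $\mathbb{F}_p$-vector spaces of dimensions $d$ and $e$, and let $c \colon A \times A \to B$ be the surjective alternating bilinear map induced by commutation in $H$. Since $p$ is odd and $H$ has class $2$ and exponent $p$, the group $H$ is recovered up to isomorphism from the triple $(A,B,c)$ via the Baker--Campbell--Hausdorff correspondence; in particular, any class-$2$ exponent-$p$ group sharing the invariants $(A,B,c)$ is isomorphic to $H$. This reduces the theorem to embedding the alternating map $c$ into the commutator form of a suitable ultraspecial group.

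Next I would set $n = \max(d,e)$ and choose inclusions $A \hookrightarrow M$ and $B \hookrightarrow W$, where $M$ and $W$ are $n$-dimensional $\mathbb{F}_p$-spaces. I extend $c$ to an alternating form $q \colon M \times M \to W$ by letting it agree with $c$ on $A \times A$ and vanish whenever one argument lies in a fixed complement of $A$ in $M$. I then feed into Theorem \ref{one} the finite field $\mathbb{F}_{p^n}$, viewed as a semifield so that multiplication by any nonzero element is bijective, in the role of the nonsingular pairing $L \times M \to W$, together with the form $q$ as the extra alternating ``twist'' on $M$. The output is an ultraspecial group $G$ of degree $n$ with $V := G/Z(G) = L \oplus M$ and $Z(G) = G' = W$, whose commutator form satisfies $\beta(\ell_1 + m_1, \ell_2 + m_2) = \ell_1 * m_2 - \ell_2 * m_1 + q(m_1,m_2)$. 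Here the field part forces $\lambda \circ \beta$ to be nondegenerate for every nonzero $\lambda \in W^*$, while $q$ is a free parameter contributing nothing to the nondegeneracy; this freedom in the $M$-part is precisely what Theorem \ref{one} supplies beyond the classical semifield groups.

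Finally, I would exhibit $H$ as the subgroup $H^* \le G$ consisting of those elements whose image in $V$ lies in $A \subseteq M$ and whose central component lies in $B \subseteq W$. Using the coordinatization of $G$ by $V \times W$ with the usual class-$2$ multiplication, available since $p$ is odd, closure of $H^*$ follows because $\beta$ restricted to $A \times A$ equals $q|_{A \times A} = c$, which takes values in $B$. Thus $H^*$ is a class-$2$ exponent-$p$ subgroup with $(H^*)' = B$, with $H^*/(H^*)' \cong A$, and with commutator map $c$; by the first paragraph $H^* \cong H$, completing the proof.

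I expect the main obstacle to be pinning down precisely that the alternating form on the Lagrangian complement $M$ is an unconstrained parameter in the construction of Theorem \ref{one}---equivalently, that prescribing the semifield pairing on $L \times M$ already forces nondegeneracy of $\lambda \circ \beta$ for every functional independently of $q$. This is exactly what lets an arbitrary alternating map $c$ be planted inside an ultraspecial commutator form. The remaining verifications, that $H^*$ is a subgroup and that it is genuinely isomorphic to $H$ and not merely a group sharing its invariants, are routine consequences of the odd-$p$ correspondence between class-$2$ exponent-$p$ groups and their alternating commutator maps.
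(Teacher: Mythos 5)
Your proposal is correct and is essentially the paper's own argument: both plant (half of) the commutator form of $H$, extended by zero to a space of the right dimension, into the $\beta$-slot of Theorem \ref{one}, realize $H$ inside the resulting second-coordinate subgroup $B$, and invoke the odd-$p$ rigidity of exponent-$p$ class-$2$ groups (your BCH correspondence plays exactly the role of the paper's isoclinism-plus-Universal-Coefficients step; likewise your extension of $c$ by zero is the paper's padding of $H$ by an elementary abelian direct factor). The one detail to adjust: feed $\beta = 2^{-1}q$ rather than $q$ into Theorem \ref{one}, since the commutator contributes $\overline{\beta} = 2\beta$ when $\beta$ is alternating; this is harmless because $p$ is odd (and in any case the data $(A,B,2c)$ and $(A,B,c)$ determine isomorphic groups).
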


We also use Theorem \ref{one} to construct ultraspecial groups $G$ having exactly one abelian subgroup of order $|G:G'|$.  We note that our construction is related to Verardi's method of constructing an ultraspecial group with one abelian subgroup from an ultraspecial $p$-group with exponent $p$ and having at least two abelian subgroups of order $|G:G'|$ that appears in Section 4 of \cite{ver}.

\begin{theorem} \label{mainb}
Let $p$ be a prime.  If $n \ge 3$ is an integer, then there exists an ultraspecial group of order $p^{3n}$ having only one abelian subgroup of order $p^{2n}$.
\end{theorem}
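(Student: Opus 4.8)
The plan is to apply the construction of Theorem \ref{one} and exhibit explicit data whose associated ultraspecial group has a single abelian subgroup of the maximal order. First I would record the translation between abelian subgroups and the commutator form. Write $Z = Z(G) = G'$, so that $|Z| = p^n$ and $V = G/Z$ is a $2n$-dimensional $\mathbb{F}_p$-space carrying the vector-valued alternating commutator map $\beta \colon V \times V \to Z$. Since $G$ is ultraspecial, a maximal abelian subgroup contains $Z$, so the abelian subgroups of order $p^{2n}$ are exactly the preimages of the $n$-dimensional subspaces $W \leq V$ that are totally isotropic for $\beta$ (equivalently $\beta|_{W \times W} = 0$). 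Thus it suffices to produce semi-extraspecial data --- that is, a $\beta$ for which $\lambda \circ \beta$ is a nondegenerate alternating form for every nonzero functional $\lambda \in Z^*$ --- admitting exactly one such Lagrangian subspace.

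Next I would start from the (commutative) semifield $\mathbb{F}_{p^n}$ and deform it, mirroring Verardi's modification in Section 4 of \cite{ver}. Identify $V = \mathbb{F}_{p^n} \oplus \mathbb{F}_{p^n}$ and $Z = \mathbb{F}_{p^n}$, and set
$$\beta((x,y),(x',y')) = (xy' - x'y) + \tau(y,y'), \quad \tau(y,y') = y (y')^q - y' y^q,$$
where $q = p^k$ with $0 < k < n$. The term $xy' - x'y$ is the usual field/semifield commutator, which by itself yields the full spread of Lagrangians (the two coordinate axes together with every graph $y = mx$); the term $\tau$, which is $\mathbb{F}_p$-bilinear and alternating because Frobenius is additive, is the twist furnished by the extra freedom in Theorem \ref{one}. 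The subspace $W_0 = \{(x,0)\}$ stays totally isotropic because $\tau$ only sees the second coordinate, so $G$ has at least this one abelian subgroup of order $p^{2n}$.

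The heart of the argument, and the step I expect to be the main obstacle, is to show $W_0$ is the only Lagrangian. For a graph $W_m = \{(x, mx)\}$ the isotropy condition collapses, using commutativity of the field, to $m^{1+q}\,(x (x')^q - x'x^q) = 0$ for all $x, x'$; since $x (x')^q - x' x^q$ is not identically zero, this forces $m = 0$, eliminating every graph as well as the axis $W_1 = \{(0,y)\}$, on which $\beta$ restricts to the nonzero form $\tau$. The remaining, genuinely delicate case is a Lagrangian $W$ that is not a graph over $W_0$, that is, with $W \cap W_1 \neq 0$: here one picks $0 \neq (0,y_0) \in W$ and analyzes the linear condition $\beta((0,y_0),(x,y)) = -xy_0 + \tau(y_0,y) = 0$ on $W$, showing it is incompatible with $\dim W = n$. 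It is in controlling these non-graph subspaces that the hypothesis $n \geq 3$ is used, since one needs enough room in $V$ to force the required contradiction (for $n = 1$ the group is extraspecial of order $p^3$ and always has $p+1$ abelian subgroups of maximal order).

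Finally I would verify that the deformed $\beta$ is still semi-extraspecial and that $G$ is ultraspecial of the right order. For a nonzero functional $\lambda$ and a nonzero $(x,y)$: if $y \neq 0$ the term $-x'y$ (varying $x'$) already produces a value on which $\lambda$ is nonzero, while if $y = 0$ then $x \neq 0$ and the term $xy'$ (varying $y'$, with $\tau$ vanishing) does the same; hence $\lambda \circ \beta$ is nondegenerate and $G$ is semi-extraspecial. Since $\dim V = 2n$ and $\dim Z = n$, the resulting group has order $p^{3n}$ with $|G'| = p^n = |G : G'|^{1/2}$, so $G$ is ultraspecial, and by the preceding steps it has exactly one abelian subgroup of order $p^{2n}$, completing the proof.
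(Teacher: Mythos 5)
Your overall framework is sound and is essentially the paper's: build $G$ via Theorem \ref{one} with $\alpha$ the multiplication of $\mathbb{F}_{p^n}$ and an alternating twist in the second coordinate, and identify abelian subgroups of order $p^{2n}$ with $n$-dimensional totally isotropic subspaces of $G/Z(G)$. But your explicit twist fails, and it fails exactly in the case you left unproved. A Lagrangian that is a graph over $W_0$ has the form $\{(x,f(x))\}$ for an \emph{arbitrary} additive ($\mathbb{F}_p$-linear) map $f$, of which there are $p^{n^2}$; you only tested the $p^n$ scalar graphs $y = mx$. Take $f(x) = x^{p^{n-k}}$, the inverse of the Frobenius power $q = p^k$; equivalently, consider $W = \{(y^q, y) : y \in \mathbb{F}_{p^n}\}$. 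Then your commutator form gives
$$
\beta\bigl((y_1^q,y_1),(y_2^q,y_2)\bigr) = y_1^q y_2 - y_2^q y_1 + y_1 y_2^q - y_2 y_1^q = 0,
$$
so $W$ is a second Lagrangian and your group has at least two abelian subgroups of order $p^{2n}$. In the paper's language the explanation is that your $\tau$ is a ``coboundary'': $\tau = \phi_\alpha(f)$ with $f(v) = -v^q$, so $\overline\beta \in \phi_\alpha({\rm add}(V))$, and Lemma \ref{five} then forces $G(\alpha,\beta) \cong G(\alpha,0)$, the Heisenberg group of $\mathbb{F}_{p^n}$ --- which has at least $1 + p^n$ abelian subgroups of maximal order, the opposite of what you want. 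No twist of the form $\alpha(f(v_1),v_2) - \alpha(f(v_2),v_1)$ can ever work.

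The missing idea is precisely the existence question your explicit formula tries to bypass: by Lemma \ref{five}(2) one needs an alternating map $\overline\beta$ lying \emph{outside} the image of $\phi_\alpha : {\rm add}(V) \rightarrow {\rm alt}(V,W)$. The paper obtains this by comparing orders: for $m = n \ge 4$ one has $|{\rm alt}(V,W)| = p^{n^2(n-1)/2} > p^{n^2} = |{\rm add}(V)|$, so the image is proper for \emph{any} $\alpha$; for $n = 3$ the two orders coincide, and instead one shows $\ker{\phi_\alpha}$ is nontrivial (for the field, $|\ker{\phi_\alpha}| = p^n$, reflecting the middle nucleus), which again makes the image proper. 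This is where the hypothesis $n \ge 3$ actually enters --- not, as you suggest, in controlling non-graph subspaces. Finally, the Lagrangians meeting $W_0$ nontrivially (your ``delicate case,'' for which you give only a sketch) are disposed of in the paper not by direct computation but by citing Verardi's Theorem 1.9: in an s.e.s.\ group with $m > n/2$, distinct abelian subgroups of maximal order have product equal to $G$, so once no abelian complement of $A$ exists, $A$ is the unique abelian subgroup of order $p^{2n}$. You would need this (or a substitute for it) in any repaired version of your argument.
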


Also, we will show that every ultraspecial group $G$ with one abelian subgroup of order $|G:G'|$ is associated with a unique semifield up to isotopism.  (We will define isotopism in Section \ref{semifield groups}.)

We also use our construction in a different direction.  To generalize semifield groups, we change our view of semifields.  Instead of viewing a semifield as a set with two binary operations, we instead view a semifield as a nonsingular map from a direct product of additive group to itself.  Because this definition is technical, we do not present it here, but we will give a formal definition in Sections \ref{maps}.  We will then define a generalized nonsingular map where we allow the image to be a different additive group.  With this definition we are able to prove the following.

\begin{theorem} \label{mainc}
Let $V$ and $W$ be elementary abelian $p$-groups of orders $p^n$ and $p^m$ respectively.  If $\alpha : V \times V \rightarrow W$ is a generalized nonsingular map, then we can associate a unique s.e.s. group $G = G (\alpha)$ to $\alpha$ so that $|G:G'| = p^{2n}$, $|G'| = p^m$ , $G$ has abelian subgroups $A$ and $B$ of order $p^{n+m}$ so that $G = AB$, and $A$ and $B$ are elementary abelian $p$-groups.  If $p$ is odd, then $G$ has exponent $p$.
\end{theorem}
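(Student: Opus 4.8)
The plan is to realize $G(\alpha)$ as an explicit central extension built on the set $V \times V \times W$. First I would define a multiplication by
\[
(u_1,v_1,w_1)(u_2,v_2,w_2) = (u_1 + u_2,\; v_1 + v_2,\; w_1 + w_2 + \alpha(u_1,v_2)),
\]
and check the group axioms: $(0,0,0)$ is the identity, $(u,v,w)^{-1} = (-u,-v,-w+\alpha(u,v))$, and associativity reduces to the identity $\alpha(u_1,v_2) + \alpha(u_1+u_2,v_3) = \alpha(u_2,v_3)+\alpha(u_1,v_2+v_3)$, which is immediate from the biadditivity of $\alpha$. A direct computation then gives the commutator
\[
[(u_1,v_1,w_1),(u_2,v_2,w_2)] = (0,0,\;\alpha(u_1,v_2)-\alpha(u_2,v_1)),
\]
so $G$ has nilpotence class at most $2$ and $G' \le \{(0,0,w) : w \in W\}$. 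Taking $u_2 = v_1 = 0$ shows every $\alpha(u_1,v_2)$ is a commutator; since the defining nonsingularity of $\alpha$ forces $\lambda \circ \alpha \not\equiv 0$ for every nonzero $\lambda \in \mathrm{Hom}(W,\mathbb{Z}/p\mathbb{Z})$, the image of $\alpha$ generates $W$, whence $G' = \{(0,0,w)\} \cong W$ and $|G'| = p^m$. The same nonsingularity, applied in each variable, shows that $(u,v,w)$ is central exactly when $\alpha(u,\cdot) \equiv 0$ and $\alpha(\cdot,v) \equiv 0$, that is, when $u = v = 0$; hence $Z(G) = G'$ and $|G:G'| = p^{2n}$.

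Next I would verify that $G$ is semi-extraspecial. Let $N$ have index $p$ in $Z(G) = W$, say $N$ is the kernel of a nonzero homomorphism $\lambda : W \to \mathbb{Z}/p\mathbb{Z}$; then $|G/N| = p^{2n+1}$ and $(G/N)' = W/N$ has order $p$, while $(G/N)/(W/N) \cong V \times V$ is elementary abelian. By the commutator formula, a coset $(u,v,w)N$ lies in $Z(G/N)$ iff $(\lambda\circ\alpha)(u,\cdot) \equiv 0$ and $(\lambda\circ\alpha)(\cdot,v)\equiv 0$. The generalized nonsingularity of $\alpha$ is precisely the statement that the $\mathbb{Z}/p\mathbb{Z}$-bilinear form $\lambda\circ\alpha$ on $V \times V$ is nondegenerate in each variable, which forces $u = v = 0$. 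Thus $Z(G/N) = W/N = (G/N)'$ has order $p$, so $G/N$ is extraspecial and $G$ is semi-extraspecial. I expect this to be the main obstacle: it is the one step that uses the full force of the defining property of a generalized nonsingular map, and it is the point at which the generalization from $V = W$ (the semifield case) to $V \neq W$ must be handled, since for $m > n$ the partial maps $\alpha(u,\cdot)$ cannot be bijective and only the functional formulation of nonsingularity survives.

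Finally I would produce the abelian subgroups and settle the exponent. Set $A = \{(u,0,w)\}$ and $B = \{(0,v,w)\}$. Since $\alpha(u,0) = \alpha(0,v) = 0$, each is closed under the product, abelian, and isomorphic as an abelian group to $V \times W$; hence both are elementary abelian of order $p^{n+m}$. Writing $(u,0,w-\alpha(u,v))(0,v,0) = (u,v,w)$ shows $G = AB$. For the exponent, an induction gives $(u,v,w)^k = (ku,\,kv,\,kw + \binom{k}{2}\alpha(u,v))$; when $p$ is odd, $p \mid \binom{p}{2}$ and $V,W$ have exponent $p$, so $(u,v,w)^p = (0,0,0)$ and $G$ has exponent $p$. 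Uniqueness of $G(\alpha)$ is immediate, since the construction is completely determined by $\alpha$. The remaining verifications---associativity, the power formula, and closure of $A$ and $B$---are routine biadditive computations, so the substantive content is concentrated in the extraspecial-quotient step above.
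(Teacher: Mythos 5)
Your construction is exactly the paper's (the paper's Theorem \ref{one} with $\beta = 0$), and your verification of the group axioms, the commutator formula, the power formula, the subgroups $A$ and $B$, and the exponent all match the paper's computations. The one place where you genuinely diverge is the key step, proving that $G$ is semi-extraspecial. The paper does not check the definition directly: it invokes the Chillag--MacDonald criterion (Theorem 3.1 of \cite{ChMc}, restated as Theorem 5.5 of \cite{semi}) that a special $p$-group is semi-extraspecial if and only if for every $g \in G \setminus G'$ and every $z \in G'$ there is $g'$ with $[g,g'] = z$, and then produces such a $g'$ using the surjectivity $\alpha(a,V) = \alpha(V,b) = W$. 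You instead verify the definition itself: for each maximal subgroup $N = \ker\lambda$ of $Z(G)$ you show $G/N$ is extraspecial, using the observation that generalized nonsingularity of $\alpha$ is equivalent to nondegeneracy (in each variable) of the form $\lambda \circ \alpha$ for every nonzero functional $\lambda$ on $W$. Both arguments are correct and use the full force of nonsingularity in essentially dual ways; yours is more self-contained (no external criterion is cited), while the paper's is shorter given the cited result and simultaneously handles the more general groups $G(\alpha,\beta)$ with the extra biadditive term $\beta$, which are needed later in the paper. One small slip in your commentary, not in the proof: the delicate case is $m < n$ (partial maps $\alpha(u,\cdot) : V \to W$ are surjective but not injective), whereas for $m > n$ no generalized nonsingular map exists at all, so that case is vacuous.
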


We can also show that every s.e.s. group that is generated by two elementary abelian subgroups can be obtained this way.

\begin{theorem} \label{maind}
Let $G$ be a s.e.s. group with $|G:G'| = p^{2n}$ and $|G'| = p^m$ and exponent $p$.  If $G$ contains elementary abelian subgroups $A$ and $B$ so that $G = AB$, then there exists a generalized nonsingular map $\alpha : V \times V \rightarrow W$ where $V$ and $W$ are elementary abelian $p$-groups of orders $p^n$ and $p^m$ respectively so that $G \cong G(\alpha)$.
\end{theorem}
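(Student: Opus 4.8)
The plan is to treat this as the converse of Theorem \ref{mainc}: starting from $G$ together with its factorization $G = AB$, I will manufacture $\alpha$ from the commutator pairing between $A$ and $B$, and then check that $G$ is reconstructed as $G(\alpha)$. First, though, I would normalize the factorization. Since $G' = Z(G) = \Phi(G)$ is central, elementary abelian, and $G$ has exponent $p$, the enlargements $A^* = AG'$ and $B^* = BG'$ are again elementary abelian, and $G = AB \le A^*B^* = G$ keeps the factorization. As $A^*$ and $B^*$ are abelian with $A^*B^* = G$, any element of $A^* \cap B^*$ centralizes $G$, so $A^* \cap B^* \le Z(G) = G'$; since both contain $G'$, in fact $A^* \cap B^* = G'$. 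Writing the commutator pairing $c : G/G' \times G/G' \to G'$, the images $\overline{A^*}, \overline{B^*}$ are totally isotropic (as $A^*, B^*$ are abelian) and span $G/G'$. Reducing $c$ modulo any hyperplane $N$ of $G'$ yields a nondegenerate alternating form, because $G/N$ is extraspecial by the s.e.s. hypothesis (this is also the content of Verardi's bound, \cite{ver}); hence each totally isotropic subspace has dimension at most $n$. Combined with $\dim \overline{A^*} + \dim \overline{B^*} \ge 2n$, both dimensions equal $n$, $G/G' = \overline{A^*} \oplus \overline{B^*}$, and $|A^*| = |B^*| = p^{n+m}$. Thus I may assume from the outset that $A \cap B = G'$, that $|A| = |B| = p^{n+m}$, and that $G/G' = \bar A \oplus \bar B$.

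Next I would define $\alpha$. Set $W = G'$ and fix $\mathbb{F}_p$-isomorphisms $f : V \to A/G'$ and $g : V \to B/G'$ from an elementary abelian group $V$ of order $p^n$. For $u, u' \in V$, choose lifts $a \in A$ of $f(u)$ and $b \in B$ of $g(u')$ and put $\alpha(u,u') = [a,b] \in W$. Replacing $a$ by $az$ or $b$ by $bz$ with $z \in G'$ central leaves $[a,b]$ unchanged, so $\alpha$ is well defined; the class-$2$ commutator identities make it biadditive, hence $\mathbb{F}_p$-bilinear.

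The crux is nonsingularity. Fix $0 \ne \lambda \in \mathrm{Hom}(W, \mathbb{F}_p)$ with kernel the hyperplane $N < G'$. If $u \in V$ lies in the left radical of $\lambda \circ \alpha$, then a lift $a \in A$ of $f(u)$ satisfies $[a,b] \in N$ for all $b \in B$, so $aN$ commutes in $G/N$ with $BN/N$, and (as $A$ is abelian) with $AN/N$, hence with all of $G/N$. Therefore $aN \in Z(G/N) = G'/N$, giving $a \in G' = A \cap B$, so $f(u) = 1$ and $u = 0$. The right radical is symmetric, so $\lambda \circ \alpha$ is nondegenerate for every $\lambda$, which is exactly the condition making $\alpha$ a generalized nonsingular map.

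Finally I would produce the isomorphism. Choosing $\mathbb{F}_p$-complements $A_0, B_0$ to $G'$ in $A, B$ gives every element of $G$ a unique normal form $a_0 b_0 z$ with $a_0 \in A_0 \cong V$, $b_0 \in B_0 \cong V$, $z \in G'$ (surjectivity and uniqueness follow from $A\cap B = G'$ and an order count). Multiplying two such elements in $G$ and collecting the single commutator that arises when the $B$-part of the first factor is moved past the $A$-part of the second shows that the product, read back in normal form, is governed precisely by the cocycle built from $\alpha$. Comparing this with the multiplication defining $G(\alpha)$ in Theorem \ref{mainc}—after absorbing the harmless argument-order and sign discrepancy into the choice of $\alpha$, which preserves the nonsingularity just established—the coordinate map $a_0 b_0 z \mapsto (u, u', z)$ is a bijective homomorphism, so $G \cong G(\alpha)$. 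I expect the main obstacle to be the nonsingularity step, that is, converting the defining s.e.s. property that every $G/N$ is extraspecial into nondegeneracy of each $\lambda \circ \alpha$; the normalization guaranteeing $G' \le A \cap B$ is what lets the commutator pairing carry all the information, while the final matching of multiplication rules is routine bookkeeping apart from aligning the cocycle conventions.
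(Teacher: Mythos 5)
Your proposal is correct, but it proves the theorem by a genuinely different route than the paper. The paper treats Theorem \ref{maind} as a corollary of Lemma \ref{six} and Corollary \ref{seven}: the commutator pairing $A/Z(G)\times G/A\rightarrow Z(G)$ is shown to be a generalized nonsingular map, where the hard direction (surjectivity in the $A$-slot, Lemma \ref{six}(3)) is proved by establishing $G = A\,C_G(g)$ via Verardi's bound applied in $G/[A,g]$; then $G$ is shown to be \emph{isoclinic} to $G(\alpha_{G,A},\beta_B)$, the isomorphism is recovered from exponent $p$ through the Universal Coefficients Theorem, and finally $B$ abelian gives $\overline{\beta_B}=0$, whence $G(\alpha_{G,A},\beta_B)\cong G(\alpha_{G,A})$ by Lemma \ref{five}(1). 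You do three things differently. First, you carry out explicitly the normalization (replacing $A,B$ by $AG',BG'$ and using the isotropic-subspace bound to force $|A|=|B|=p^{n+m}$ and $A\cap B=G'$) that the paper's reduction to Corollary \ref{seven} leaves implicit, even though that corollary requires exactly these properties; this is a worthwhile addition. Second, your nonsingularity argument bypasses Lemma \ref{six}(3) entirely: reducing modulo a hyperplane $N<G'$ and using that the extraspecial quotient $G/N$ has center $G'/N$ is shorter and more transparent, though it needs both $A$ and $B$ abelian, whereas the paper's lemma works with a single abelian subgroup (which is what the more general statement of Corollary \ref{seven}, with nontrivial $\beta$, requires). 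Third, you construct the isomorphism directly by splitting $A=A_0\times G'$ and $B=B_0\times G'$ (possible by exponent $p$) and reading off the cocycle from the normal form $a_0b_0z$, avoiding isoclinism and the Universal Coefficients Theorem altogether; this is more elementary and self-contained, at the cost of not yielding the paper's stronger isoclinism statements. One step you treat briskly deserves care: the cocycle your normal form produces is $[b_0,a_0']$, a function of the \emph{first} factor's $B$-part and the \emph{second} factor's $A$-part, so to match the multiplication convention of Theorem \ref{one} you must list the $B$-coordinate first and use $\alpha^{\sharp}(v,w)=[\,\mathrm{lift}_B(v),\mathrm{lift}_A(w)\,]$, the negative transpose of your $\alpha$. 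That map is again generalized nonsingular (it is anti-isotopic to $\alpha$; compare Lemma \ref{twelve}), so your remark about absorbing the discrepancy is justified, but it is the coordinates of the triple, not merely the arguments of $\alpha$, that must be swapped.
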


When we modify our construction to incorporate a bilinear map $\beta : V \times V \rightarrow W$ in addition to the generalized nonsingular map $\alpha : V \times V \rightarrow W$ where $V$ and $W$ are elementary abelian $p$-groups of orders $p^n$ and $p^m$, we obtain a s.e.s. group $G = G(\alpha,\beta)$ where $|G:G'| = p^{2n}$, $|G'| = p^m$, $G$ has an elementary abelian subgroup $A$ of order $p^{n+m}$, and when $p$ is odd, $G$ has exponent $p$.   We will show that if $G$ is any s.e.s. group with $|G:G'| = p^{2n}$, $|G'| = p^m$, and exponent $p$ so that $G$ has an abelian subgroup $A$ of order $p^{n+m}$, then there exist $\alpha$ and $\beta$ as above so that $G \cong G (\alpha,\beta)$.

We will show in Section \ref{two abelian} that ideas of isotopism and anti-isotopism can be extended to these generalized nonsingular maps.  In Theorem 5.1 of \cite{hir} and Theorem 6.6 of \cite{knst1} (see also Theorem 9.1 of \cite{semi}), it is shown two semifield groups are isomorphic if and only if the semifields are isotopic or anti-isotopic.  We will generalize this result as follows.

\begin{theorem} \label{maine}
Let $V$ and $W$ be elementary abelian $p$-groups of order $p^n$ and $p^m$ respectively where $m > n/2$, and let $\alpha_1, \alpha_2 : V \times V \rightarrow W$ be generalized nonsingular maps.  Then $G (\alpha_1) \cong G (\alpha_2)$ if and only if $\alpha_1$ and $\alpha_2$ are either isotopic or anti-isotopic.
\end{theorem}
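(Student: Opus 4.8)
The plan is to translate the group isomorphism into an isometry of the associated $W$-valued commutator forms, and then to track what such an isometry does to the distinguished pair of maximal isotropic subspaces $V \times 0$ and $0 \times V$. Write $U = V \times V$ and attach to $\alpha$ the bilinear alternating form
$$ c_\alpha\bigl((x_1,x_2),(y_1,y_2)\bigr) = \alpha(x_1,y_2) - \alpha(y_1,x_2) \in W, $$
which is the commutator map of $G(\alpha)$ read on $G(\alpha)/G(\alpha)' = U$ with values in $G(\alpha)' = W$. Since $p$ is odd and $G(\alpha)$ has class $2$ and exponent $p$, the Lazard (Baer) correspondence gives that $G(\alpha_1) \cong G(\alpha_2)$ if and only if there is a pair $(\Theta,h) \in GL(U) \times GL(W)$ with $h \circ c_{\alpha_1} = c_{\alpha_2}\circ(\Theta\times\Theta)$. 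I would then set up the dictionary that an \emph{isotopism} $\alpha_1 \to \alpha_2$ is precisely such an isometry with $\Theta = f \oplus g$ block-diagonal for the splitting $U = (V\times 0)\oplus(0\times V)$, while an \emph{anti-isotopism} is one with $\Theta$ block-antidiagonal (it interchanges the two copies of $V$ before applying $f,g$, the resulting sign being absorbed into $h$). With this dictionary the forward implication is immediate: a block-diagonal or block-antidiagonal isometry is in particular an isometry, hence induces a group isomorphism.

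The substance is the reverse implication, where the plan is to normalize an arbitrary isometry to a block (anti)diagonal one. Given an isometry $(\Theta,h) \colon c_{\alpha_1}\to c_{\alpha_2}$, the images $X := \Theta(V\times 0)$ and $Y := \Theta(0\times V)$ form a complementary pair of maximal (that is, $n$-dimensional) totally isotropic subspaces of $c_{\alpha_2}$. The key structural input, and the only place I expect to use $m > n/2$, is a classification of these subspaces. I would use that nonsingularity forces, for each nonzero $v_0$, both partial maps $v \mapsto \alpha_2(v_0,v)$ and $v \mapsto \alpha_2(v,v_0)$ to be onto $W$ (their images lie in no hyperplane, since each $b_{w^*} = w^* \circ \alpha_2$ is nondegenerate), hence to have kernel of dimension exactly $n-m$. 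For a maximal isotropic $\bar U$ set $d = \dim(\bar U \cap (V\times 0))$ and $e = \dim(\bar U \cap (0\times V))$; the isotropy relation shows that $d \ge 1$ forces $d \ge m$ and, symmetrically, $e \ge 1$ forces $e \ge m$, so from $d + e \le n < 2m$ at most one of $d,e$ is positive. Taking $e = 0$, the subspace is a graph $\{(v,Tv)\}$, and one further use of surjectivity excludes $0 < d < n$. Thus every maximal isotropic subspace is $V\times 0$, or $0\times V$, or a \emph{full graph} $\{(v,Tv)\}$ with $T$ invertible, such a graph being isotropic exactly when $(v,v')\mapsto \alpha_2(v,Tv')$ is symmetric.

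The final step is to convert the pair $(X,Y)$ into an explicit isotopism or anti-isotopism. Parametrizing $X = \{(v,Sv)\}$ and $Y = \{(Ru,u)\}$ (with each of $R,S$ either $0$ or invertible by the classification, and $I - SR$ invertible by complementarity), the map coordinatizing $c_{\alpha_2}$ on the ordered pair $(X,Y)$ computes to
$$ \beta(v,u) = c_{\alpha_2}\bigl((v,Sv),(Ru,u)\bigr) = \alpha_2(v,u) - \alpha_2(Ru,Sv). $$
Here $\alpha_1$ is isotopic to $\beta$ by construction (the restrictions of $\Theta$ supply the isotopism maps), and reversing the order of the pair replaces $\beta$ by an anti-isotopic map; so the whole theorem reduces to the decisive lemma that, under $m > n/2$, the map $\beta(v,u) = \alpha_2(v,u) - \alpha_2(Ru,Sv)$ is always isotopic to $\alpha_2$. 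When $S = 0$ or $R = 0$ this is transparent, since one correction term vanishes and $\beta = \alpha_2$; when both $R,S$ are invertible I would exploit the two symmetry conditions coming from isotropy of $X$ and of $Y$, together with surjectivity of the partial maps, to produce the required invertible $f,g,h$.

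\textbf{The main obstacle is this last lemma}, precisely the case in which both $X$ and $Y$ are full graphs. These pairs are exactly the ones invisible to the ``at least two abelian subgroups'' semifield theorem, and controlling them is what the inequality $m > n/2$ is there to make possible: it is what rules out maximal isotropics meeting both axes nontrivially and what pins the graph parameters down enough to trivialize the correction term $\alpha_2(Ru,Sv)$ up to isotopy. Once the lemma is in hand, composing $\Theta$ with the block change of coordinates it produces exhibits $\Theta$ as block-diagonal or block-antidiagonal, which by the dictionary of the first paragraph delivers the desired isotopism or anti-isotopism.
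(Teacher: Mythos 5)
Your route is genuinely different from the paper's. The paper never touches isotropic subspaces directly: it observes that an isomorphism is in particular an isoclinism, that isoclinisms preserve the isotopy class of $\alpha_{G,A}$ (Lemma \ref{nine}), that two abelian subgroups $A,B$ with $G=AB$ and $A\cap B=G'$ yield anti-isotopic maps, and then it quotes Verardi's result that $m>n/2$ makes the graph $\mathcal{A}(G)$ complete, so the conclusion follows by walking along edges (Corollary \ref{thirteen}). Your classification of the $n$-dimensional totally isotropic subspaces of $c_{\alpha_2}$ is precisely the linear-algebra shadow of that completeness statement, and your sketch of it does go through: if $(x,0)\in\bar U$ with $x\neq 0$, then the second projection of $\bar U$ lands in the kernel of $v\mapsto\alpha_2(x,v)$, forcing $d\geq m$; and if moreover some $Tv'\neq 0$, then isotropy plus surjectivity give $W=\alpha_2(V,Tv')=\alpha_2(v',TV)$, so $\dim TV\geq m$, i.e. $d\leq n-m$; together these contradict $m>n/2$. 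This has the merit of making the proof self-contained where the paper leans on \cite{ver}.

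The genuine gap is exactly the step you flag: the ``decisive lemma'' that $\beta(v,u)=\alpha_2(v,u)-\alpha_2(Ru,Sv)$ is isotopic to $\alpha_2$ is asserted, not proved, and the whole theorem rests on it. Fortunately it is true, and the proof is two lines, needing far less than you anticipate. Isotropy of $X=\{(v,Sv)\}$ says that $\gamma(v,v'):=\alpha_2(v,Sv')$ is symmetric; hence $\alpha_2(Ru,Sv)=\gamma(Ru,v)=\gamma(v,Ru)=\alpha_2(v,SRu)$, so by bilinearity $\beta(v,u)=\alpha_2\bigl(v,(I-SR)u\bigr)$. Since $X\cap Y=0$ is equivalent to invertibility of $I-SR$, the triple $(\mathrm{id}_V,\,I-SR,\,\mathrm{id}_W)$ is an isotopism from $\beta$ to $\alpha_2$; note that neither invertibility of $S$ nor the isotropy of $Y$ nor any further surjectivity is used, so this one computation covers all your cases at once. (This is consistent with the paper's statement that complementary abelian subgroups give \emph{anti}-isotopic maps: once a full graph is isotropic, $\alpha_2$ is isotopic to the symmetric map $\gamma$, hence self-dual, so isotopic and anti-isotopic coincide for it.) One smaller repair: your dictionary between isomorphisms and isometries invokes the Lazard--Baer correspondence, hence $p$ odd and exponent $p$, but the theorem has no oddness hypothesis. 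You only need that machinery in the direction ``isometry implies isomorphism''; for the ``if'' direction of the theorem use instead the explicit isomorphisms of Lemmas \ref{eight} and \ref{twelve}, valid for all $p$, and for the ``only if'' direction note that any isomorphism automatically induces the pair $(\Theta,h)$ on $G/G'$ and $G'=Z(G)$ compatibly with commutation, for every prime.
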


Furthermore, in Proposition 4.2 of \cite{hir}, Lemma 4.3 of \cite{knst1}, and Theorem 3.14 of \cite{ver} (see also Theorem 10.1 of \cite{semi}), it is proved that a semifield group has more than two abelian subgroups of the maximal order if and only if the semifield that is isotopic to commutative semifield.  Translating to generalized nonsingular maps, we will define symmetric maps in Section \ref{complements}, and we obtain the following result.

\begin{theorem} \label{mainf}
Let $V$ and $W$ be elementary abelian $p$-groups of order $p^n$ and $p^m$ respectively, and let $\alpha : V \times V \rightarrow W$ be a generalized nonsingular map.  Then $G (\alpha)$ has three distinct abelian subgroups $A, B, C$ so that $G = AB = AC = BC$ if and only if $\alpha$ is isotopic to a symmetric generalized nonsingular map.
\end{theorem}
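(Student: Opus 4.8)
The plan is to translate the condition on abelian subgroups into a statement about totally isotropic subspaces of the commutator pairing, and then to recognize the symmetry condition on $\alpha$ that is hidden in the existence of a third such subspace. Write $G = G(\alpha)$, so that, by the construction behind Theorem \ref{mainc}, we have $G' = Z(G) = W$ of order $p^m$, the quotient $G/G'$ is elementary abelian of order $p^{2n}$ and may be identified with $V \oplus V$, and the commutator induces a $W$-valued alternating pairing $b\colon (V\oplus V)\times(V\oplus V)\to W$ which restricts to $0$ on each of the standard summands and is governed on mixed arguments by $\alpha$; concretely $b((a,b),(a',b')) = \alpha(a,b') - \alpha(a',b)$, so that $V\oplus 0$ and $0\oplus V$ are totally isotropic and the standard abelian complements $A_0,B_0$ of the construction satisfy $G = A_0B_0$. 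Because $G$ has exponent $p$, I first record the dictionary that is used throughout Section \ref{two abelian}: the abelian subgroups of $G$ of the maximal order $p^{n+m}$ are exactly the full preimages in $G$ of the $n$-dimensional totally isotropic subspaces of $(V\oplus V, b)$, and two such subgroups $A,B$ satisfy $G = AB$ if and only if the corresponding subspaces $\bar A, \bar B$ are complementary, $\bar A \oplus \bar B = V\oplus V$. (Since $Z(G)\le A\cap B$ always and $G$ is semi-extraspecial, $|AB| = |A|\,|B|/|A\cap B|$ equals $|G|$ exactly when $A\cap B = Z(G)$, i.e. when $\bar A\cap\bar B = 0$, i.e. when the two $n$-subspaces are complementary.)

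With this dictionary, the hypothesis $G = AB = AC = BC$ becomes: $\bar A, \bar B, \bar C$ are three pairwise complementary $n$-dimensional totally isotropic subspaces of $(V\oplus V, b)$. I then reduce to the standard framework: since replacing the pair of complementary abelian subgroups used to coordinatize $G$ by another such pair replaces $\alpha$ by an isotopic map (this is precisely the isotopism invariance established in Section \ref{two abelian} and already exploited in Theorem \ref{maine}), I may assume after an isotopism that $\bar A = V\oplus 0$ and $\bar B = 0\oplus V$. The remaining subspace $\bar C$ is complementary to both, so the first-coordinate projection restricts to an isomorphism $\bar C \to V$ and $\bar C = \{(v, \psi v) : v\in V\}$ is the graph of some $\psi \in \mathrm{GL}(V)$ (injectivity of $\psi$ from $\bar C\cap(0\oplus V)=0$, surjectivity from $\bar C\cap(V\oplus 0)=0$). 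The key computation is that $\bar C$ is totally isotropic precisely when
\[
\alpha(v, \psi v') = \alpha(v', \psi v) \qquad \text{for all } v, v' \in V,
\]
since $b((v,\psi v),(v',\psi v')) = \alpha(v,\psi v') - \alpha(v',\psi v)$.

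This last identity is the whole point. Setting $\gamma(v,v') := \alpha(v, \psi v')$, the displayed condition says exactly that $\gamma$ is symmetric, while $\gamma(v,v') = \alpha(v, \psi v')$ exhibits $\gamma$ as isotopic to $\alpha$ via the triple $(\mathrm{id}_V, \psi, \mathrm{id}_W)$; as isotopism preserves nonsingularity, $\gamma$ is again a generalized nonsingular map. Hence the existence of a third subspace $\bar C$ is equivalent to the existence of $\psi\in\mathrm{GL}(V)$ making $\alpha(\cdot,\psi\cdot)$ symmetric, which is equivalent to $\alpha$ being isotopic to a symmetric generalized nonsingular map. For the converse I run this backwards: given an isotopism $\gamma(v,v') = h(\alpha(fv,gv'))$ with $\gamma$ symmetric, I set $\psi = gf^{-1}$ and check that $\alpha(a,\psi a') = \alpha(a',\psi a)$, so $\bar C = \{(a,\psi a): a\in V\}$ is a totally isotropic subspace complementary to both $V\oplus 0$ and $0\oplus V$; lifting $\bar C$ to $C$ and taking $A = A_0$, $B = B_0$ yields three distinct abelian subgroups (distinctness is automatic because $\psi$ is bijective) with $G = AB = AC = BC$.

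I expect the main obstacle to be the reduction to the standard framework, namely justifying cleanly that an arbitrary complementary pair of maximal abelian subgroups can be moved to the standard coordinate axes at the cost of replacing $\alpha$ by an isotopic map, and that this change does not interfere with the symmetry bookkeeping. Once the isotopism invariance of the construction from Section \ref{two abelian} is in hand this is formal, but it must be invoked with care. The only other point needing attention is the passage between a totally isotropic subspace and a genuinely (elementary) abelian subgroup, which is where exponent $p$ is used.
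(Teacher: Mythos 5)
Your core argument is exactly the paper's own (Lemma \ref{ten}, resting on Lemma \ref{three} and Corollary \ref{threea}): complements of the standard abelian subgroup modulo $G'$ are graphs of additive maps $V \rightarrow V$, such a graph is abelian precisely when $\alpha (f(v_1),v_2) = \alpha (f(v_2),v_1)$ for all $v_1,v_2$, complementarity to the other axis forces the map to be invertible, and then $\alpha (f(\cdot),\cdot)$ --- your $\alpha(\cdot,\psi\,\cdot)$ is the mirror-image graph over the other coordinate --- is a symmetric generalized nonsingular map isotopic to $\alpha$. Your converse, which builds the graph of $\psi = g f^{-1}$ directly inside $G(\alpha)$, is a slightly more self-contained version of the paper's route through Lemma \ref{eight} followed by Corollary \ref{threea}; both are fine. (One small bookkeeping slip: well-definedness of $\psi$ as a graph comes from $\bar C \cap (0 \oplus V) = 0$, and injectivity of $\psi$ from $\bar C \cap (V \oplus 0) = 0$, not the other way around.)

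The one place you go beyond the paper --- and where your justification has a genuine gap --- is the reduction of arbitrary $A, B$ to the standard coordinate axes, which you yourself flag as the main obstacle. The paper reads Theorem \ref{mainf} as Lemma \ref{ten}, i.e.\ with $A$ and $B$ the subgroups of the construction in Theorem \ref{one}, so no such reduction is needed there. Your claim that re-coordinatizing by another complementary pair of abelian subgroups ``replaces $\alpha$ by an isotopic map'' overstates what Section \ref{two abelian} establishes: passing from one maximal abelian subgroup to another changes the attached map only up to isotopy \emph{or anti-isotopy}, and even that is only known along edge-paths in the graph $\mathcal{A}(G)$, which is known to be connected (indeed complete) only when $m > n/2$. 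The anti-isotopy ambiguity is harmless for your purposes --- a map anti-isotopic to a symmetric map $\gamma$ is also isotopic to $\gamma$, since symmetry of $\gamma$ lets you swap the two arguments back --- but you should say this explicitly. The connectivity issue is not harmless: when $m \le n/2$, nothing in the paper relates the map attached to an arbitrary pair $(A,B)$ to the original $\alpha$ at all, so the fully general statement you aim at is not reached by this reduction; this is presumably why the paper proves the result only for the standard pair.
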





\section{semifield groups}\label{semifield groups}

We say $(F,+,*)$ is a {\it pre-semifield} if $(F,+)$ is an abelian group with at least two elements whose identity is $0$ and $*$ is a binary operation so that $a*(b+c) = a*b + a*c$ and $(a+b)*c = a*c + b*c$ for all $a,b,c \in F$ and $a*b = 0$ implies that either $a = 0$ or $b = 0$.  If $F$ has an identity under $*$, then we say that $F$ is a {\it semifield}.


Given a (pre)-semifield $(F,+,*)$, we can define the {\it semifield group} $G (F,*)$ to be the group with the set $\{ (a,b,c) \mid a,b,c \in F \}$ where the multiplication is given by $(a_1,b_1,c_1) (a_2,b_2,c_2) = (a_1 + a_2,b_1 + b_2, c_1 + c_2 + a_1 * b_2)$.  When the multiplication is clear, we will drop the $*$ and just write $G (F)$ for $G (F,*)$.  Observe that the subgroups $A_1 = \{ (a,0,c) \mid a,c \in F \}$ and $A_2 = \{ (0,b,c) \mid b,c \in F \}$ will be abelian subgroups of $G(F)$.  When $|F|$ is finite, it is not difficult to show that $G(F)$ will be an ultraspecial group with at least two abelian subgroups of maximal order $|F|^2$. (See Lemma 3 of \cite{beis}.)


We say that two pre-semifields $(F_1,+_1,*_1)$ and $(F_2,+_2,*_2)$ are {\it isotopic} if there exist additive group isomorphisms $\alpha, \beta, \gamma: F_1 \rightarrow F_2$ so that $\gamma (a *_1 b) = \alpha (a) *_2 \beta (b)$ for all $a, b \in F_1$.
It has been shown that being isotopic is an equivalence relation on semifields and pre-semifields, and also, that every pre-semifield is isotopic to some semifield.  If $*$ is associative, then $F$ is a field, and when $|F|$ is finite, this semifield is obviously isomorphic to the unique field with order $|F|$.  We say that the associated semifield group $G (F)$ is the {\it Heisenberg group} of order $|F|$.  In particular, we reserve the name Heisenberg group for $G (F)$ when $F$ is a field.  We note that in some places in the literature the name Heisenberg groups is used for $G (F)$ when $F$ is any semifield.   Any semifield that is isotopic to a field will have an associative multiplication, and thus, will be isomorphic to that field.

We now suppose that $F$ is an arbitrary (pre)-semifield.  The group $G (F)$ will have more than two abelian subgroups of maximal order if and only if $F$ is isotopic to a commutative semifield this was proved by Verardi as Theorem 3.14 of \cite{ver} when $p$ is odd, and by Hiranime as Proposition 4.2 (i) of \cite{hir} and Knarr and Stroppel as Lemma 4.3 of \cite{knst1} for all primes including $p = 2$.


Let $F$ be a (pre)-semifield, then we define $*^{\rm op}$ by $a *^{\rm op} b = b * a$.  It is not difficult to see that $F^{\rm op} = (F,+,*^{\rm op})$ is a (pre)-semifield.  Obviously, if $F$ is commutative, then $F = F^{\rm op}$.  On the other hand, it is possible to have $F$ isotopic to $F^{\rm op}$ when $F$ is not isotopic to a commutative semifield.  When $F$ is isotopic to $F^{\rm op}$, we say that $F$ is {\it self-dual}.  We say that $F_1$ and $F_2$ are {\it anti-isotopic} if $F_1$ is isotopic to $F_2^{\rm op}$.

It is not difficult to show that $F$ and $F^{\rm op}$ are isotopic if and only if $F$ is anti-isotopic to itself.  In Theorem 5.1 of \cite{hir} and Theorem 6.6 of \cite{knst1}, it is proved that $G (F_1)$ and $G(F_2)$ are isomorphic if and only if $F_1$ and $F_2$ are either isotopic or anti-isotopic.  It is quite clear that $F$ and $F^{\rm op}$ are always anti-isotopic, so $G(F) \cong G(F^{\rm op})$ whether or not $F$ is self-dual.

\section{Generalized semifield groups} \label{maps}

Let $V$ be a finite additive group.  Suppose that $\alpha : V \times V \rightarrow V$ is a biadditive map so that $\alpha (v_1,v_2) = 0$ implies either $v_1 = 0$ or $v_2 = 0$.  It is not very difficult to see that defining a multiplication $*_\alpha$ by $v_1 *_\alpha v_2 = \alpha (v_1,v_2)$ will make $(V,+,*_\alpha)$ a pre-semifield.  Conversely, if $(F,+,*)$ is a (pre)-semifield, then by defining $\alpha_F (a,b) = a*b$, we see that $\alpha_F : F \times F \rightarrow F$ is a biadditive map so that $\alpha_F (a,b) = 0$ implies $a = 0$ or $b = 0$.   With this in mind, we will say that $\alpha$ is a {\it nonsingular map} if $\alpha : V \times V \rightarrow V$ is biadditive so that $\alpha (v_1,v_2) = 0$ implies either $v_1 = 0$ or $v_2 = 0$.   

Suppose that $\alpha$ is a nonsingular map.  It is not difficult to see that the property that $\alpha (v_1,v_2) = 0$ implies that $v_1 = 0$ or $v_2 = 0$ is equivalent to saying when $v_1 \ne 0$ that the kernel of the map $v_2 \mapsto \alpha (v_1,v_2)$ is $0$ and when $v_2 \ne 0$ that the kernel of the map $v_1 \mapsto \alpha (v_1,v_2)$ is $0$.  This says that when $\alpha$ is a nonsingular map, the map $v_2 \mapsto \alpha (v_1,v_2)$ is one-to-one when $v_1 \ne 0$ and the map $v_1 \mapsto \alpha (v_1,v_2)$ is one-to-one when $v_2 \ne 0$.  Since $V$ is finite, this implies that that the map $v_2 \mapsto \alpha (v_1,v_2)$ is onto when $v_1 \ne 0$ and $v_1 \mapsto \alpha (v_1,v_2)$ is onto when $v_2 \ne 0$.  In particular, if  $\alpha : V \times V \rightarrow V$ is a biadditive map, then $\alpha$ is a nonsingular map if and only if $\alpha (v,V) = V$ and $\alpha (V,v) = V$ for all $0 \ne v \in V$.  Thus, we obtain the following generalization: let $V$ and $W$ be additive groups.  We say that $\alpha : V \times V \rightarrow W$ is a {\it generalized nonsingular map} if $\alpha$ is biadditive and $\alpha (v,V) = \alpha (V,v) = W$ whenever $v \ne 0$ for $v \in V$.  Note that when we apply these definitions $V$ and $W$ will be elementary abelian $p$-groups for some prime $p$, and so, they may be viewed as vector spaces over the field of order $p$ and we may assume that $\alpha$ is a bilinear map.

We now generalize the semifield groups from Section \ref{semifield groups} in two ways.  First, we replace the semifield with a generalized nonsingular map.  Second, we add a second biadditive map $\beta$ that only involves elements of the second coordinate.  If $\beta : V \times V \rightarrow W$ is a biadditive map, then we define $\overline {\beta} (b_1,b_2) = \beta (b_1,b_2) - \beta (b_2,b_1)$ for all $b_1, b_2 \in V$. 

\begin{theorem} \label{one}
Let $p$ be a prime, let $m \le n$ be integers, and let $V$ and $W$ be elementary abelian groups of orders $p^n$ and $p^m$ respectively viewed additively.  Let $\alpha : V \times V \rightarrow W$ be a generalized nonsingular map and let $\beta : V \times V \rightarrow W$ be a biadditive map.  Consider the set $G = V \times V \times W$ and define a multiplication on $G$ by
$$
(a_1,b_1,c_1) (a_2,b_2,c_2) = (a_1+a_2,b_1+b_2, c_1+c_2+ \alpha (a_1,b_2) + \beta (b_1,b_2)).
$$
Then the following hold:
\begin{enumerate}
\item $G$ is a semi-extraspecial group with $G' = Z (G) = \{ (0,0,c) \mid c \in W \}$.
\item $[(a_1,b_1,c_1),(a_2,b_2,c_2)] = (0,0,\alpha (a_1,b_2) - \alpha (a_2,b_1) + \overline {\beta} (b_1,b_2))$.
\item If $p$ is odd, $G$ has exponent $p$.
\item $A = \{ (a,0,c) \mid a \in V, c \in W \}$ is an abelian subgroup of order $p^{n+m}$.
\item $B = \{ (0,b,c) \mid b \in V, c \in W \}$ is a subgroup of order $p^{n+m}$.
\item $G = AB$ and $A \cap B = G'$.
\item For an element $v \in V \setminus \{ 0 \}$, we have $\overline\beta (u,v) = 0$ for all $u \in V$ if and only if $(0,v,c) \in Z (B)$ for all $c \in W$.  In particular, $B \le C_G (0,v,c)$ if and only if $\overline\beta (u,v) = 0$ for all $u \in V$.
\item $\overline\beta (u,v) = 0$ for all $u,v \in V$ if and only if $B$ is abelian.
\end{enumerate}
\end{theorem}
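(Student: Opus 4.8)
The plan is to verify each of the eight assertions by direct computation from the defining multiplication, handling them in a logical order that lets later parts reuse earlier ones. I would start by computing the identity element and inverses to confirm $G$ is a group; the identity is clearly $(0,0,0)$, and associativity follows from the biadditivity of $\alpha$ and $\beta$. The heart of the routine verification is part (2), the commutator formula, so I would establish that early. Writing $g_i = (a_i,b_i,c_i)$, one computes $g_1 g_2$ and $g_2 g_1$ from the definition and takes the difference in the third coordinate; the first two coordinates cancel since $V$ is abelian, and the third coordinate yields $\alpha(a_1,b_2) + \beta(b_1,b_2) - \alpha(a_2,b_1) - \beta(b_2,b_1)$, which regroups as $\alpha(a_1,b_2) - \alpha(a_2,b_1) + \overline{\beta}(b_1,b_2)$ by the definition of $\overline{\beta}$.

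With (2) in hand, I would attack (1). From the commutator formula, every commutator lies in $\{(0,0,c) \mid c \in W\}$, so $G' \le \{(0,0,c)\}$, and this set is clearly central (its elements commute with everything, since setting $a_1=b_1=0$ and $a_2=b_2=0$ in the formula gives $0$), so $\{(0,0,c)\} \le Z(G)$. To get equality $G' = Z(G) = \{(0,0,c)\}$, I would show $Z(G) \le \{(0,0,c)\}$ using nonsingularity: if $(a,b,c)$ is central with $(a,b)\neq(0,0)$, then commuting with suitable elements forces $\alpha(a,b_2)=0$ for all $b_2$ (when $a\neq 0$, contradicting $\alpha(a,V)=W\neq 0$) or the analogous condition on the first argument, yielding a contradiction. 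For $G' = \{(0,0,c)\}$ I would show the commutators fill all of $W$: fixing $a_1\neq 0$, the map $b_2\mapsto\alpha(a_1,b_2)$ is onto $W$ by nonsingularity, so the commutators already realize every element of $W$. Finally, that $G$ is semi-extraspecial follows because $Z(G)=G'=\Phi(G)$ and for any maximal subgroup $N$ of $Z(G)$, the quotient $G/N$ has center of order $p$ and commutator of order $p$, making it extraspecial; I would cite that this is the standard characterization.

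The remaining parts are shorter. For (3), when $p$ is odd I would compute $g^p$; because the class is $2$ and $\Phi(G)=G'=Z(G)$ has exponent $p$, the $p$-th power collapses using the standard class-$2$ power formula $g^p = (pa, pb, pc + \binom{p}{2}(\text{commutator terms}))$, and since $p \mid \binom{p}{2}$ for odd $p$ and $V,W$ have exponent $p$, every coordinate vanishes. Parts (4)--(6) are immediate structural checks: restricting the multiplication to $A$ kills the $\alpha$ and $\beta$ contributions (as $b_1=b_2=0$), so $A$ is abelian of order $p^{n+m}$; for $B$ the product has third coordinate $c_1+c_2+\beta(b_1,b_2)$, giving a subgroup of the same order, abelian precisely when $\beta$ is symmetric; and $G=AB$ with $A\cap B = \{(0,0,c)\}=G'$ follows by counting orders. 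For (7) I would specialize the commutator formula (2) with $a_1=a_2=0$ to elements of $B$: $(0,u,c_1)$ and $(0,v,c_2)$ commute iff $\overline{\beta}(u,v)=0$, and quantifying over all $u$ gives the $Z(B)$ characterization; part (8) is then the universally quantified version of (7), stating $B$ is abelian iff $\overline{\beta}$ vanishes identically. The main obstacle I anticipate is pinning down $Z(G) \le \{(0,0,c)\}$ in part (1) cleanly, since that is the one place where nonsingularity of $\alpha$ does essential work rather than mere bookkeeping; everything else is careful but routine manipulation of the defining formula.
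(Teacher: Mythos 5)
Your computations for parts (2)--(8) are correct and follow essentially the same route as the paper, but part (1) has a genuine gap at its final step. You correctly establish $Z(G) = G' = C := \{(0,0,c) \mid c \in W\}$, but then assert that ``for any maximal subgroup $N$ of $Z(G)$, the quotient $G/N$ has center of order $p$,'' citing this as standard. It is not: being special with $Z(G) = G' = \Phi(G)$ does not imply that the center stays small in such quotients. For instance, if $E$ is extraspecial of order $p^3$, then $E \times E$ is special with center of order $p^2$, yet for $N = Z(E) \times 1$ the quotient $(E/Z(E)) \times E$ has center $(E/Z(E)) \times Z(E)$ of order $p^3$, so it is not extraspecial. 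Thus the claim that $Z(G/N)$ has order $p$ is precisely what must be proved, and it is equivalent to the statement that $[g,G] = G'$ for \emph{every} $g \in G \setminus G'$: since $G$ has class $2$, the map $x \mapsto [g,x]$ is a homomorphism into $G'$, so if $[g,G]$ were proper in $G'$ it would lie in some maximal $N$, and then $gN$ would be a noncentral-looking element that becomes central in $G/N$. This per-element surjectivity is the Chillag--MacDonald criterion (Theorem 3.1 of \cite{ChMc}) that the paper verifies directly, and it is strictly stronger than what your argument delivers: your proof that $Z(G) \le C$ only shows $[g,G] \ne 0$ for noncentral $g$, and your proof that commutators fill $W$ varies $b_2$ against a fixed $a_1 \ne 0$, which only works when the fixed element has $b_1 = 0$ (otherwise the term $\overline\beta(b_1,b_2)$ perturbs the map $b_2 \mapsto \alpha(a_1,b_2) + \overline\beta(b_1,b_2)$, and nonsingularity of $\alpha$ no longer guarantees surjectivity).

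To close the gap you need the paper's two-case computation. Given $g = (a,b,c) \notin C$ and $z \in W$: if $b \ne 0$, take $g' = (a',0,0)$ where $\alpha(a',b) = -z$, which exists since $\alpha(V,b) = W$; the choice of second coordinate $0$ in $g'$ kills both the $\alpha(a,\cdot)$ and the $\overline\beta(b,\cdot)$ contributions, so $[g,g'] = (0,0,z)$. If $b = 0$, then $a \ne 0$, and one takes $g' = (0,b',0)$ with $\alpha(a,b') = z$, which exists since $\alpha(a,V) = W$. The essential point your outline misses is that for an element with both coordinates nonzero, the test element must be chosen inside $A$ so that nonsingularity in the first argument of $\alpha$ can be applied without interference from $\overline\beta$. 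With this insertion, the rest of your outline (the power formula for (3), the structural checks (4)--(6), and the specialization of the commutator formula for (7) and (8)) is correct and matches the paper's proof.
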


\begin{proof}
It is a not difficult, but somewhat tedious calculation to see that the multiplication is associative.  Also, it is not difficult to see that $(0,0,0)$ is an identity and
$$(a,b,c)^{-1} = (-a,-b,-c + \alpha (a,b) + \beta (b,b)).$$
Thus, $G$ is a group.  In addition, one can easily complete the computation to see that conclusion (2) holds.  Using conclusion (2), we have that $Z(G) \ge C = \{ (0,0,c \mid c \in W \}$ and $G' \le C$.

Note that one of the many conditions equivalent to a $p$-group $G$ being semi-extraspecial is that $G$ is special and for every element $g \in G \setminus G'$ and every element $z \in G'$ there exists an element $g' \in G$ so that $[g,g'] = z$ (see Theorem 3.1 of \cite{ChMc} or Theorem 5.5 of \cite{semi}).  Thus, to show that $G$ is semi-extraspecial and that $Z(G) = G' = C$, we show for every element $g = (a,b,c) \in G \setminus C$ and every element $(0,0,z) \in C$ that there exists an element $g' = (a',b',c') \in G$ so that $[g,g'] = (0,0,c)$.  Suppose $b \ne 0$.  Since $\alpha (V,b) = W$, there exists $a' \in V$ so that $\alpha (a',b) = -z$.  Taking $b'=0$ and $c' = 0$, we obtain
$$[g,g'] = (0,0,\alpha (a,0) - \alpha (a',b) + \overline{\beta} (b,0)) = (0,0,0-(-z)+0) = (0,0,z).$$
Now, assume $b = 0$.  Since $\alpha (a,V) = W$, there exists $b' \in V$ so that $\alpha (a,b') = z$.  Taking $a' = 0$ and $c' = 0$, we see that
$$
[g,g'] = (0,0,\alpha (a,b') - \alpha (0,0) + \overline{\beta} (0,b')) = (0,0,z).
$$
This proves conclusion (1).

We prove that
$$
(a,b,c)^n = (na,nb,nc + {n \choose 2} (\alpha (a,b) + \beta (b,b)))
$$
by induction on $n$.  Observe that
$$
(a,b,c)^2 = (2a,2b,2c + \alpha (a,b) + \beta (b,b)).
$$
Since ${2 \choose 2} = 1$, the base case holds.  By induction,
$$
(a,b,c)^{n-1} = ((n-1)a,(n-1)b, (n-1)c +  {{n-1} \choose 2} (\alpha (a,b) + \beta (b,b))).
$$
We see that the first and second coordinates of $(a,b,c)^n$ will be
$$(n-1)a+a = na \ \ \ \ ~~~~{\rm and} ~~~~\ \ \ \ (n-1)b + b = nb.$$
The third coordinate of $(a,b,c)^n$ will be
$$(n-1)c + {{n-1} \choose 2} (\alpha (a,b) + \beta (b,b)) + c + \alpha((n-1)a,b) + \beta ((n-1),b).$$
Observe that
$${{n-1} \choose 2} \alpha (x,y) = \frac {(n-1)(n-2)} 2 \alpha (x,y)$$
and $\alpha ((n-1)x,y) = (n-1) \alpha (x,y)$ for all $x,y$.  Also,
$$\frac {(n-1)(n-2)}2 + \frac {(n-1)2}2  = \frac {(n-1)n}2 = {n \choose 2}.$$
It follows that
$$
{{n-1} \choose 2} \alpha (x,y) + \alpha ((n-1)x,y) = {n \choose 2} \alpha (x,y)
$$
for all $x,y$.  Using these observations, we determine that the third coordinate of $(x,y,z)^n$ is $nc + {n \choose 2} \alpha (a,b) + {n \choose 2} \beta (b,b))$.  When $p$ is odd, we know $p$ divides ${p \choose 2}$, and so conclusion (3) holds.

It is easy to see that $A$ is an abelian subgroup of $G$ and $|A| = |V||W| = p^{n+m}$ so this is conclusion (4).  Similarly, $B$ is a subgroup of $G$ and $|B| = |V||W| = p^{n+m}$ yielding conclusion (5).  Conclusion (6) is immediate.  Notice that $(0,v,c)$ will commute with every element $(0,u,c') \in B$ if and only if $\overline {\beta} (v,u) = 0$ for all $u \in V$.  This implies that $\beta (v,u) = \beta (u,v)$ for all $u \in V$ if and only if $(0,v,c) \in Z(B) \setminus Z(G)$.  This yields conclusion (7).  Observe that conclusion (8) is an immediate consequence of conclusion (7).
\end{proof}

Given a generalized nonsingular map $\alpha : V \times V \rightarrow W$ and a bilinear map $\beta : V \times V \rightarrow W$, we write $G (V,W,\alpha,\beta)$ for the group $G$ in Theorem \ref{one}.  When $V$ and $W$ are clear, we will write $G (\alpha,\beta)$ in place of $G(V,W,\alpha,\beta)$.

If $\overline\beta = 0$, then $A$ and $B$ will be two abelian subgroups of order $|V||W|$ in $G = G (\alpha,\beta)$ whose product is $G$ and whose intersection is $G'$.  Notice that if $V = W$, then $\alpha$ is a nonsingular map.  If both $V = W$ and $\overline {\beta} = 0$, then $G (\alpha,\beta)$ is the semifield group associated with the (pre)-semifield given by $\alpha$.  In all cases, we will write $G (\alpha)$ to denote the group $G (\alpha,0)$.  Also, note that this proves Theorem \ref{mainc}.  If $F$ is a pre-semifield and $\alpha_F$ is the associated nonsingular map determined by $F$, then $G (F) = G (\alpha_F)$.  Thus, we can view the groups $G(\alpha,\beta)$ as generalizations of semifield groups, and we call them {\it generalized semifield groups}.

The situation of generalized semifield groups will arise in a number of places, so we set the following hypotheses:

{\bf Hypothesis 1:}  Let $p$ be a prime.  Let $V$ and $W$ be elementary abelian $p$-groups of orders $p^n$ and $p^m$ respectively viewed additively.  Let $\alpha: V \times V \rightarrow W$ be a generalized nonsingular map. Let $\beta : V \times V \rightarrow W$ be a biadditive map.  Let $A$ and $B$ be the subgroups of $G (\alpha, \beta)$ that are defined in Theorem \ref{one}.

\section{Proof of Theorem \ref{maina}}

In this section, we will prove Theorem \ref{maina}.  We will actually prove a stronger result.  To do this we need the following well-known definition of Hall.

Recall that two groups $G$ and $H$ are {\it isoclinic} if there exist isomorphisms $a : G/Z(G) \rightarrow H/Z(H)$ and $b : G' \rightarrow H'$ so that
$$[a(g_1 Z(G)),a(g_2 Z(G))] = b ([g_1, g_2])$$
for all $g_1, g_2 \in G$.   It is not difficult to show that isoclinic determines an equivalence relation on groups.  It is easy to see that if $G$ and $H$ are isomorphic, then $G$ and $H$ are isoclinic.  On the other hand, it is well known  that if $G$ and $H$ are two extraspecial groups of the same order, then $G$ and $H$ are isoclinic.  Since $G$ and $H$ need not be isomorphic, being isoclinic is weaker than being isomorphic.


Let $p$ be an odd prime, and let $P$ be a $p$-group with a subgroup $X$ so that $X$ is central in $P$, $Z(P)$ and $P/X$ are elementary abelian, $|X| = p^m$, $|P:X| = p^n$, and $n \ge m$.   Take $V$ and $W$ to be vector spaces of dimensions $n$ and $m$ respectively over $Z_p$, the field of order $p$.  We can find linear isomorphisms $\delta : V \rightarrow P/X$ and $\tau : X \rightarrow W$.  Since $p$ is odd, we know that $2$ has a unique multiplicative inverse in $Z_p$, and we write $2^{-1}$ for this element.  We define the bilinear map $\beta_P : V \times V \rightarrow W$ by $\beta (v_1,v_2) = 2^{-1} \tau ([\delta (v_1),\delta (v_2)])$ for all $v_1, v_2 \in V$.  Let $\alpha$ be any generalized nonsingular map from $V \times V$ to $W$.  Since $m \le n$, we know that there exists such an $\alpha$.  Take $G = G(\alpha,\beta_P)$, and let $B$ be the subgroup $B$ found in the conclusion of Theorem \ref{one}.  It is not difficult to see that $P$ will be isoclinic to the subgroup $B$.

Using the Universal Coefficients Theorem (see Chapter 5 of \cite{warfield}), one can show that if $P$ and $Q$ are $p$-groups with exponent $p$, then $P$ is isomorphic to $Q$ if and only if $P$ is isoclinic to $Q$.  In particular, in the situation of the previous paragraph, if $P$ has exponent $p$, then $H$ is isomorphic to $B$ since $B$ necessarily has exponent $p$.

We are now ready to prove Theorem \ref{one}.  We have that $H$ is a $p$-group with nilpotence class two and exponent $p$.  If $|H:Z(H)| = |Z(H)|$ take $P = H$ and $X = Z(H) = Z(P)$.  If $|H:Z(H)| < |Z(H)|$, then let $A$ be an elementary abelian group of order $|Z(H)|^2/|H|$.  In this case, take $P = H \times A$ and $X = Z(H) \le Z(P)$.  When $|H:Z(H)| > |Z(H)|$, we take $A$ to be an elementary abelian group of order $|H|/|Z(H)|^2$.  We take $P = A \times H$ and $X = Z (P) = A \times Z(H)$.  Notice that in all cases, we have $|P:X| = |X|$.  Let $V$ and $W$ be elementary abelian $p$-groups of order $|P:X| = |X|$.  Finally, let $\alpha$ be a generalized nonsingular map from $V \times V$ to $W$. Using the previous two paragraphs, we see that we now have that $H$ is isomorphic to a subgroup of an ultraspecial group.  This proves Theorem \ref{maina}.

\section{Complements modulo $G'$} \label{complements}

We now work to determine which choices for $\beta$ imply that $G (\alpha,\beta)$ will be the product of two abelian subgroups.  We start at the level of complements in a vector space.  The work in this section is probably well-known, but include it here to make the argument more self-contained.

\begin{lemma} \label{two}
Let $V$ be a finite additive $p$-group for some prime $p$, let $H = V \oplus V$, and let $A = \{ (v,0) \mid v \in V \}$.  Then the following are true:
\begin{enumerate}
\item If $f$ is an additive map from $V$ to $V$, then
$$B_f = \{ (f(v),v) \mid v \in V \}$$
is a complement for $A$ in $H$.
\item If $C$ is a complement for $A$ in $H$, then $C = B_f$ for some additive map $f : V \rightarrow V$.
\item $B_f = B_g$ if and only if $f = g$, for additive maps $f,g : V \rightarrow V$.
\end{enumerate}
\end{lemma}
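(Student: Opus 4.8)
The plan is to realize every complement of $A$ as the graph of an additive map, using the projection of $H$ onto its second summand. Write $\pi : H \to V$ for the map $\pi(x,y) = y$, whose kernel is exactly $A$. For part (1), I would first note that $B_f$ is a subgroup, which is immediate from the additivity of $f$ since $(f(v),v) + (f(w),w) = (f(v+w), v+w)$. To see $A \cap B_f = 0$, observe that an element of the intersection has the form $(f(v),v)$ with second coordinate $0$, forcing $v = 0$ and hence $f(v) = f(0) = 0$. Finally, for any $(x,y) \in H$ the identity $(x,y) = (x - f(y), 0) + (f(y), y)$ exhibits $H = A + B_f$; equivalently, one may just count orders, since $|A| = |B_f| = |V|$ while $|A \cap B_f| = 1$ gives $|A + B_f| = |V|^2 = |H|$.

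For part (2), the key observation is that if $C$ is a complement, then $\pi$ restricts to an isomorphism $C \to V$. Indeed, the kernel of $\pi$ restricted to $C$ is $C \cap A = 0$, so $\pi|_C$ is injective; and because $A$ and $C$ are subgroups of the finite abelian group $H$ with $A \cap C = 0$ and $A + C = H$, we get $|A||C| = |H| = |V|^2$ and therefore $|C| = |V|$. Injectivity together with the equality of orders forces $\pi|_C$ to be onto. Let $\sigma : V \to C$ denote the inverse isomorphism, and define $f : V \to V$ by taking $f(v)$ to be the first coordinate of $\sigma(v)$, so that $\sigma(v) = (f(v), v)$. Since $\sigma$ is additive, $f$ is additive, and $C = \sigma(V) = B_f$, as required.

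Part (3) is the routine bookkeeping step: if $B_f = B_g$, then for each $v \in V$ the point $(f(v),v)$ lies in $B_g$, and comparing second coordinates shows it equals $(g(v),v)$, whence $f(v) = g(v)$ for all $v$; the reverse implication is trivial. The only place demanding any care is the surjectivity of $\pi|_C$ in part (2), and this follows cleanly once the order count $|C| = |V|$ is in hand. Everything else is a direct verification, so I expect no serious obstacle.
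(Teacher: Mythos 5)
Your proof is correct and takes essentially the same approach as the paper's: part (1) by the same order count, part (3) by comparing second coordinates, and part (2) by realizing a complement $C$ as the graph of an additive map in the second coordinate. The only cosmetic difference is in part (2), where you package the argument as injectivity-plus-surjectivity of the projection $\pi|_C$ with inverse $\sigma$, while the paper phrases the same fact by saying $C$ is a transversal meeting each coset $A + (0,v)$ exactly once and then deduces additivity of $f$ by comparing $(f(v_1)+f(v_2),v_1+v_2)$ with $(f(v_1+v_2),v_1+v_2)$ inside $C$; both hinge on $C$ being a subgroup.
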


\begin{proof}
It is easy to see that $A \cap B_f = 0$ and $|B_f| = |B|$, so $|AB_f| = |A||B_f| = p^n p^n = p^{2n} = |H|$.  We deduce that $H = AB_f$, and so, $B_f$ is a complement for $A$ in $H$.  Let $B = \{ (0,v) \mid v \in V \}$.  If $0$ is the map sending every element of $V$ to $0$, then $B = B_0$, so $B$ is a complement for $A$ in $H$.  We know that every complement for $A$ in $H$ is a transversal for $A$ in $H$.  Let $C$ be a complement for $A$ in $H$.  Then for each element $b \in B$, we see that $C \cap A + b$ will be a single element.  Thus, we can define a function $f : V \rightarrow V$ so that $C \cap A +b = \{ (f(v),v) \}$ where $b = (0,v)$.  It follows that $C = \{ (f(v),v) \mid v \in V \}$.  Since both $(f(v_1) + f(v_2), v_1 + v_2)$ and $(f (v_1 + v_2), v_1 + v_2)$ lie in $C$ and in the same coset of $A$, we deduce that $f(v_1) + f(v_2) = f (v_1 + v_2)$, and hence, $f$ is an additive map.  We conclude that $C = B_f$.  Obviously, if $f = g$, then $B_f = B_g$.  On the other hand, if $B_f = B_g$, then $(f(v),v)$ and $(g(v),v)$ are in the coset $A + (0,v)$ and lie in the same transversal, so they must be equal.  This implies that $f(v) = g(v)$ for all $v \in V$, and so, $f = g$.
\end{proof}

We now apply Lemma \ref{two} to the quotient of $G = G (\alpha,\beta)$ by its center.  If we assume Hypothesis 1, then $G/G' = A/G' \oplus B/G'$, so we can apply the notation of Lemma \ref{two} to $G/G'$.  Observe that $A/G' = \{ (v,0,0)G' \mid v \in V \}$ and $B/G' = \{ (0,v,0)G' \mid v \in V \}$.  If $f$ is an additive map from $V$ to $V$, we can then define $B_f = \{ (f(v),v,0) \mid v \in V \}$.  It is not difficult to see that $B_f$ will be a subgroup of $G$ of order $p^{n+m}$.  Also, it is not difficult to see that $B_f/G'$ will correspond to the subgroup labeled as $B_f$ when Lemma \ref{two} is applied to $G/G'$, and we will use the same notation to denote both the subgroup of $G$ and its quotient in $G/G'$.  We believe the meaning of the notation will always be clear from context.  In this next lemma, we gather some facts about $B_f$ as a subgroup of $G$.

\begin{lemma} \label{three}
Assume Hypothesis 1 with $G = G(\alpha,\beta)$.  Then the following are true:
\begin{enumerate}
\item If $f : V \rightarrow V$ is an additive map, then $G = A B_f$ and $A \cap B_f = G'$.
\item If $D \le G$ satisfies $G = AD$ and $A \cap D = G'$, then $D = B_f$ for some unique additive map $f :V \rightarrow V$.
\item Finally,
$$[(f(v_1),v_1,w_1),(f(v_2),v_2,w_2)] = $$
$$ = (0,0,\alpha (f(v_1),v_2) - \alpha (f(v_2),v_1) + \overline \beta (v_1,v_2) )$$
for all elements $v_1,v_2 \in V, w_1, w_2 \in W$ determines the commutation for elements in $B_f$ where $f:V \rightarrow V$ is an additive map.
\end{enumerate}
\end{lemma}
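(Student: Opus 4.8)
The plan is to prove the three parts of Lemma \ref{three} by reducing the first two assertions to Lemma \ref{two} applied to the quotient $G/G'$, and by a direct commutator computation for the third. Before starting, I would recall the dictionary established just before the lemma: since $G/G' = A/G' \oplus B/G'$ and the subgroups $B_f = \{(f(v),v,0) \mid v \in V\}$ of $G$ descend to exactly the complements named $B_f$ in Lemma \ref{two} when that lemma is applied to $G/G'$, I can freely transport statements about complements of $A/G'$ in $G/G'$ back and forth to statements about the subgroups $B_f$ of $G$.

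For conclusion (1), I would argue as follows. Given an additive map $f$, part (1) of Lemma \ref{two} tells me $B_f/G'$ is a complement for $A/G'$ in $G/G'$, which means $(G/G') = (A/G')(B_f/G')$ and $(A/G') \cap (B_f/G') = 0$. Pulling back through the quotient map, the first equation gives $G = A B_f$ (since $G' \le A$ and $G' \le B_f$), and the second gives $A \cap B_f \le G'$; as $G' \le A \cap B_f$ trivially, I conclude $A \cap B_f = G'$. For conclusion (2), I would run this in reverse: if $D \le G$ with $G = AD$ and $A \cap D = G'$, then $D \ge G'$, so $D/G'$ is defined and is a complement for $A/G'$ in $G/G'$. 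Part (2) of Lemma \ref{two} then produces an additive $f$ with $D/G' = B_f/G'$, and since both $D$ and $B_f$ contain $G'$ and have the same image in $G/G'$, they are equal as subgroups of $G$; uniqueness of $f$ is exactly part (3) of Lemma \ref{two}.

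For conclusion (3), the approach is a straightforward substitution into the commutator formula (2) of Theorem \ref{one}. Taking the two elements $(f(v_1),v_1,w_1)$ and $(f(v_2),v_2,w_2)$ of $B_f$, I set $a_1 = f(v_1)$, $b_1 = v_1$, $a_2 = f(v_2)$, $b_2 = v_2$ in that formula and read off
$$
[(f(v_1),v_1,w_1),(f(v_2),v_2,w_2)] = (0,0,\alpha(f(v_1),v_2) - \alpha(f(v_2),v_1) + \overline\beta(v_1,v_2)),
$$
which is precisely the claimed identity; the third coordinates $w_1,w_2$ lie in $W = G'$ and so do not enter the commutator.

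The only genuinely non-routine point is the passage between $G$ and $G/G'$ in parts (1) and (2): I must be careful that $G' \le A \cap B_f$ (so that $B_f$ is a genuine preimage of a complement and the containments invert correctly) and that distinct subgroups of $G$ containing $G'$ are distinguished by their images in $G/G'$. Both facts are immediate from $G'$ being central and contained in every relevant subgroup, so I do not expect any real obstacle; the lemma is essentially bookkeeping organizing Lemma \ref{two} and formula (2) of Theorem \ref{one} into a usable form.
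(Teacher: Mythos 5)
Your proposal is correct and follows exactly the paper's route: parts (1) and (2) are obtained by applying Lemma \ref{two} to $G/G'$ and transporting complements back through the quotient, and part (3) is a direct substitution into the commutator formula of Theorem \ref{one}(2). The paper states this in two sentences without details; you have simply made explicit the bookkeeping (that $G' \le A \cap B_f$, and that subgroups containing $G'$ are determined by their images in $G/G'$) that the paper leaves implicit.
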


\begin{proof}
Observe that the first two conclusions follow from Lemma \ref{two} applied in $G/G'$.  The third conclusion arises from Theorem \ref{one} (2).
\end{proof}

Notice that Lemma \ref{three} (2) implies that if $G = G (\alpha,\beta)$, then there is a bijection between the set of additive maps from $V$ to $V$ and
$${\rm comp} (G,A) = \{ D \le G \mid G = DA, D \cap A = G' \}$$
defined by $f \mapsto B_f$, the subgroup of $G$.  The map $0_V : V \rightarrow V$ defined by $0_V (v) = 0$ is an additive map from $V$ to $V$, and $B_{0_V} = B$ from above.  Thus, when $\overline{\beta} = 0$, then $B_{0_V}$ is abelian.

When $\alpha : V \times V \rightarrow W$ is a biadditive map, we say that $\alpha$ is {\it symmetric} if $\alpha (v_1,v_2) = \alpha (v_2,v_1)$ for all $v_1, v_2 \in V$.  Observe that if $F$ is a semifield, then $F$ is commutative if and only if $\alpha_F$ is symmetric.  Recall that a semifield group $G (F)$ has more than two abelian subgroups if and only if $F$ is isotopic to a commutative semifield.  Notice that if we consider $G (F) = G(\alpha_F)$ in view of Theorem \ref{one} and fix the subgroup $A$ as in that theorem, this would say that ${\rm comp} (G(F),A)$ contains at least two abelian subgroups if and only if $F$ is isotopic to a commutative semifield.  We know that distinct abelian subgroups of maximal order in a semifield group must have a product that is the whole group.  (See Theorem 1.9 of \cite{ver}.) Hence, if $B$ and $C$ are distinct abelian subgroups that lie in ${\rm comp} (A)$, then $G = BC$ and $B \cap C = G'$.

When we consider $G = G (\alpha)$ where $\alpha$ is a generalized nonsingular map and $A$ is the subgroup found in Theorem \ref{one}, we know that ${\rm comp} (G,A)$ contains at least one abelian subgroup.  In this next corollary, we show if $\alpha$ is symmetric, then there will be more than one and their product will be $G$.  Note that if $|W|^2 \le |V|$, then we do not know that the product of abelian subgroups of order $|V||W|$ is necessarily $G$, so this result obtains a stronger conclusion than just that ${\rm comp} (G,A)$ has at least two abelian members.  We will obtain a converse later.

\begin{corollary} \label{threea}
Assume Hypothesis 1 with $\beta = 0$.  If $\alpha$ is symmetric and $G = G(\alpha)$, then ${\rm comp} (G,A)$ contains at least two abelian subgroups $B$ and $C$ that satisfy $G = BC$ and $B \cap C = G'$.
\end{corollary}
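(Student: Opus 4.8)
The plan is to exploit the parametrization of the members of ${\rm comp}(G,A)$ by additive maps $f : V \to V$ established in Lemma \ref{three}, together with the commutator formula in Lemma \ref{three}(3). Since we are in the case $\beta = 0$, we have $\overline\beta = 0$, so that formula reduces to
$$
[(f(v_1),v_1,w_1),(f(v_2),v_2,w_2)] = (0,0,\alpha(f(v_1),v_2) - \alpha(f(v_2),v_1))
$$
for all $v_1,v_2 \in V$ and $w_1,w_2 \in W$. Reading this off, $B_f$ is abelian if and only if $\alpha(f(v_1),v_2) = \alpha(f(v_2),v_1)$ for all $v_1,v_2 \in V$. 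First I would record that $B = B_{0_V}$ is one abelian member of ${\rm comp}(G,A)$, which is just Theorem \ref{one}(8) (or conclusion (5)) specialized to $\overline\beta = 0$.

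The crux is then to exhibit a second abelian complement, and the natural candidate is $C = B_{1_V}$, where $1_V : V \to V$ is the identity map. Plugging $f = 1_V$ into the abelian criterion above turns it into $\alpha(v_1,v_2) = \alpha(v_2,v_1)$ for all $v_1,v_2 \in V$, which is precisely the hypothesis that $\alpha$ is symmetric. Hence $C = B_{1_V}$ is abelian. Since $1_V \ne 0_V$ (we may assume $V \ne 0$, as otherwise $G = G'$ and the statement is vacuous), the bijection $f \mapsto B_f$ following Lemma \ref{three} guarantees $C = B_{1_V} \ne B_{0_V} = B$, so $B$ and $C$ are two distinct abelian subgroups of ${\rm comp}(G,A)$.

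It remains to upgrade ``two abelian complements'' to the stronger $G = BC$ with $B \cap C = G'$. By Lemma \ref{three}(1), $C \in {\rm comp}(G,A)$, so $|C| = p^{n+m}$, and likewise $|B| = p^{n+m}$. An element of $B$ has the form $(0,v,w)$ while an element of $C$ has the form $(v,v,w)$; for such an element to lie in both we need $v = 0$, so $B \cap C = \{(0,0,w) \mid w \in W\} = G'$, which has order $p^m$. The order count $|BC| = |B|\,|C|/|B \cap C| = p^{2n+m} = |G|$ then forces $G = BC$, completing the argument. I do not expect a genuine obstacle here: once the identity map is recognized as the map whose abelian condition coincides with symmetry, everything else is an order computation. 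The only point requiring a word of care is the degenerate case $V = 0$, which is handled by excluding it at the outset.
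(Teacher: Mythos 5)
Your proposal is correct and follows essentially the same route as the paper: both take $C = B_{1_V}$ for the identity map $1_V$, use Lemma \ref{three}(3) with $\overline\beta = 0$ to translate symmetry of $\alpha$ into commutativity of $B_{1_V}$, and note $B = B_{0_V}$ is abelian. The only difference is that you spell out the intersection computation and the order count $|BC| = |B||C|/|B \cap C| = |G|$, which the paper dismisses as ``easy to see.''
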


\begin{proof}
Define $1_V : V \rightarrow V$ by $1_V (v) = v$ for all $v \in V$.  It is easy to see that $1_V$ is an additive map.  We see that
$$
\alpha (1_V(v_1),v_2) = \alpha (v_1,v_2) = \alpha (v_2,v_1) = \alpha(1_V (v_2),v_1)
$$
for all $v_1, v_2 \in V$.  Since $\beta = 0$, we have $\overline {\beta} = 0$, and we can use Lemma \ref{three} (3) to see that $C = B_{1_V}$ will be abelian.  We previously noted that $B = B_{0_V}$ is abelian.  It is easy to see that $B \cap C = G'$ so $G = BC$.  This gives the two subgroups in ${\rm comp} (G,A)$ that have the properties stated.
\end{proof}

We now can identify all of the choices for $\beta$ that imply that $G(\alpha,\beta)$ has an abelian subgroup $B \in {\rm comp} (G,A)$.

\begin{lemma} \label{four}
Assume Hypothesis 1 with $G = G(\alpha,\beta)$.  Then there exists an abelian subgroup $C \in {\rm comp} (G,A)$ if and only if there exists an additive map $f : V \rightarrow V$ so that $\overline {\beta} (v_1,v_2) = \alpha (f(v_2),v_1) - \alpha (f (v_1),v_2)$ for all $v_1, v_2 \in V$.  If this occurs, then $C = B_f$.
\end{lemma}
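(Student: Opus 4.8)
The plan is to read off everything from Lemma~\ref{three}, since that lemma already identifies the members of ${\rm comp}(G,A)$ as the subgroups $B_f$ and records the commutation inside each $B_f$. The one conceptual point to isolate is that a subgroup $C = B_f$ is abelian precisely when every commutator of a pair of its elements is trivial, and by Lemma~\ref{three}~(3) such a commutator depends only on the $V$-parameters $v_1,v_2$ of the two elements (the $W$-coordinates drop out because $G'$ is central). Thus $C = B_f$ is abelian if and only if
$$\alpha(f(v_1),v_2) - \alpha(f(v_2),v_1) + \overline{\beta}(v_1,v_2) = 0$$
for all $v_1,v_2 \in V$, which rearranges to the asserted identity $\overline{\beta}(v_1,v_2) = \alpha(f(v_2),v_1) - \alpha(f(v_1),v_2)$.

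For the forward implication I would start with an abelian $C \in {\rm comp}(G,A)$ and invoke Lemma~\ref{three}~(2) to obtain the unique additive map $f : V \rightarrow V$ with $C = B_f$. Applying Lemma~\ref{three}~(3) and using that $C$ is abelian forces the displayed commutator to vanish for every pair $v_1,v_2$, and rearranging yields the required $f$. For the converse I would take an additive map $f$ satisfying $\overline{\beta}(v_1,v_2) = \alpha(f(v_2),v_1) - \alpha(f(v_1),v_2)$, set $C = B_f$, and note that Lemma~\ref{three}~(1) already guarantees $C \in {\rm comp}(G,A)$. Substituting the hypothesis into the commutator formula of Lemma~\ref{three}~(3) makes every commutator trivial, so $C = B_f$ is abelian; this also records the final clause $C = B_f$.

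There is essentially no serious obstacle here: the content is entirely contained in Lemma~\ref{three}, and the remaining work is bookkeeping of signs and indices together with the observation that being abelian is equivalent to the vanishing of the commutator for all $V$-parameters. The only place to be slightly careful is to confirm that the commutator genuinely is independent of the $W$-coordinates $w_1,w_2$ of the two elements of $B_f$, so that abelianness is captured exactly by the two-variable identity in $v_1,v_2$ rather than a condition involving $w_1,w_2$ as well; this is immediate from Theorem~\ref{one}~(2) since $G' = Z(G)$ is central.
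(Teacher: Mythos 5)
Your proposal is correct and takes essentially the same approach as the paper: both directions reduce to Lemma~\ref{three}, using part (2) to write an abelian complement as $B_f$, part (3) to translate abelianness of $B_f$ into the vanishing of $\alpha(f(v_1),v_2) - \alpha(f(v_2),v_1) + \overline{\beta}(v_1,v_2)$, and part (1) to confirm $B_f \in {\rm comp}(G,A)$ in the converse. Your extra observation that the commutator is independent of the $W$-coordinates is already encoded in the formula of Lemma~\ref{three}~(3), so it requires no additional argument.
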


\begin{proof}
First, suppose there exists an additive map $f : V \rightarrow V$ so that $\overline {\beta} (v_1,v_2) = \alpha (f(v_2),v_1) - \alpha (f (v_1),v_2)$ for all $v_1, v_2 \in V$.  This implies that $\alpha (f(v_1),v_2) - \alpha (f(v_2),v_1) + \overline \beta (v_1,v_2) = 0$ for all $v_1,v_2 \in V$.  Notice we may apply Lemma \ref{three} (1) and (3) to see that this implies that $B_f$ is abelian and $G = A B_f$, so we have $C = B_f$.

Conversely, suppose that $C$ is abelian so that $G = AC$ and $A \cap C = G'$.  By Lemma \ref{three} (2), it follows that $C = B_f$ for some additive map $f : V \rightarrow V$.  Since $C$ is abelian, we use Lemma \ref{three} (3) to see that $\alpha (f(v_1),v_2) - \alpha (f(v_2),v_1) + \overline \beta (v_1,v_2) = 0$ for all $v_1,v_2 \in V$, and thus, $\overline {\beta} (v_1,v_2) = \alpha (f(v_2),v_1) - \alpha (f (v_1),v_2)$ for all $v_1, v_2 \in V$.
\end{proof}

\section{Cosets in the group of alternating maps}

Let $V$ be an additive group, and let ${\rm add} (V)$ be the set of all additive maps from $V$ to $V$.  Note that using pointwise addition, we can make ${\rm add} (V)$ a group.  When $V$ is an elementary abelian $p$-group of order $p^n$, it is not difficult to see that $|{\rm add} (V)| = (p^n)^n = p^{(n^2)}$.

Let $V$ and $W$ be additive groups.  Recall that a biadditive map $\gamma:V \times V \rightarrow W$ is {\it alternating} if $\gamma (v,v) = 0$ for all $v \in V$.  When $|W|$ is odd, it is not difficult to see that $\gamma$ being alternating is equivalent to $\gamma (v_1,v_2) = - \gamma (v_2,v_1)$ for all $v_1, v_2 \in V$.  We let ${\rm alt} (V,W)$ be the set of all alternating biadditive maps $\gamma: V \times V \rightarrow W$.  Recall that if $\beta$ is any biadditive map, then $\overline {\beta} \in {\rm alt} (V,W)$.  Also, it is not difficult to see that if $\beta \in {\rm alt} (V,W)$, then $\overline {\beta} = 2 \beta \in {\rm alt} (V,W)$.  When $|W|$ is odd and $\beta \in {\rm alt} (V,W)$, we see that $\overline{\beta} = 0$ if and only if $\beta = 0$.

Using pointwise addition, we see that ${\rm alt} (V,W)$ is a group.
%
When $V$ is elementary abelian of order $p^n$ and $W$ is elementary abelian of order $p^m$, we deduce that $|{\rm alt} (V,W)| = p^{m n (n-1)/2}$.

We continue to let $V$ and $W$ be additive groups.  Suppose that $\alpha$ is a generalized nonsingular map from $V \times V$ to $W$.  For each $f \in {\rm add}  (V)$, we define $\phi_{\alpha} (f) : V \times V \rightarrow W$ by $\phi_\alpha (f) (v_1,v_2) = \alpha (f (v_1), v_2) - \alpha (f (v_2),v_1)$.  Observe that $\phi_{\alpha} (f) \in {\rm alt} (V,W)$, so $\phi_{\alpha} : {\rm add} (V) \rightarrow {\rm alt} (V,W)$.  Also, note that $\phi_{\alpha} (f+g) = \phi_{\alpha} (f) + \phi_{\alpha} (g)$, so $\phi_{\alpha}$ is a group homomorphism.  In particular, $\phi_\alpha ({\rm add} (V))$ is a subgroup of ${\rm alt} (V,W)$.

Recall that Lemma \ref{three} (2) implies that if $G = G (\alpha,\beta)$, then there is a bijection between ${\rm alt} (V)$ and ${\rm comp} (G,A)$.  Furthermore, using Lemma \ref{three} (3), commutation in $B_f$ is given by $\phi_\alpha (f) + \overline\beta$.

\begin{lemma} \label{five}
Assume Hypothesis 1.  Then the following are true:
\begin{enumerate}
\item If $\beta_1$ is a biadditive map from $V \times V$ to $W$ that satisfies $\overline {\beta_1} \in \phi_\alpha ({\rm add} (V)) + \overline {\beta}$, then $G(\alpha,\beta) \cong G(\alpha,\beta_1)$.
\item Let $G = G(\alpha,\beta)$.  Then ${\rm comp} (G,A)$ contains abelian subgroups if and only if
$$\overline {\beta} \in \phi_\alpha ({\rm add} (V)).$$
\item If $\overline {\beta} \in \phi_\alpha ({\rm add} (V))$, then $G(\alpha,\beta) \cong G(\alpha)$ and the number of abelian subgroups in ${\rm comp} (G,A)$ equals $|\ker {\phi_\alpha}|$ where $G = G(\alpha)$.
\end{enumerate}
\end{lemma}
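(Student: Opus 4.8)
The plan is to handle the three parts in the order (2), (3), (1): part (2) is essentially a translation of Lemma \ref{four}, part (3) follows from part (1) together with the correspondence of Lemma \ref{three}, and part (1) carries all of the real content. For (2), recall from Lemma \ref{four} that ${\rm comp} (G,A)$ contains an abelian subgroup if and only if there is an additive $f$ with $\overline{\beta}(v_1,v_2) = \alpha (f(v_2),v_1) - \alpha (f(v_1),v_2)$ for all $v_1,v_2 \in V$. The right-hand side equals $-\phi_\alpha (f)(v_1,v_2) = \phi_\alpha (-f)(v_1,v_2)$, and since $-f$ ranges over ${\rm add} (V)$ as $f$ does, this is exactly the condition $\overline{\beta} \in \phi_\alpha ({\rm add} (V))$.

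For (3), the isomorphism $G(\alpha,\beta) \cong G(\alpha)$ is the special case $\beta_1 = 0$ of part (1): since $\phi_\alpha ({\rm add} (V))$ is a subgroup it is closed under negation, so $\overline{\beta} \in \phi_\alpha ({\rm add} (V))$ forces $0 = \overline{\beta_1} \in \phi_\alpha ({\rm add} (V)) + \overline{\beta}$. For the count I work in $G = G(\alpha)$, where $\overline{\beta} = 0$. By Lemma \ref{three}~(3) commutation in $B_f$ is governed by $\phi_\alpha (f)$, so $B_f$ is abelian if and only if $\phi_\alpha (f) = 0$, i.e. $f \in \ker{\phi_\alpha}$. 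Since $f \mapsto B_f$ is a bijection from ${\rm add} (V)$ onto ${\rm comp} (G,A)$ by Lemma \ref{three}~(2), the abelian members are precisely the $B_f$ with $f \in \ker{\phi_\alpha}$, giving $|\ker{\phi_\alpha}|$ of them.

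For (1), write the hypothesis as $\overline{\beta_1} - \overline{\beta} = \phi_\alpha (g)$ for some $g \in {\rm add} (V)$, set $f = -g$, and seek an isomorphism of ``shear'' type $\Phi (a,b,c) = (a + f(b),\, b,\, c + \psi (b))$ for a suitable $\psi : V \rightarrow W$ with $\psi (0) = 0$. Any such $\Phi$ is clearly a bijection, and a direct computation of the two sides of $\Phi(x)\Phi(y) = \Phi(xy)$ (with products taken in $G(\alpha,\beta_1)$ on the left and $G(\alpha,\beta)$ on the right) shows that $\Phi$ is a homomorphism exactly when
$$\psi (b_1+b_2) - \psi (b_1) - \psi (b_2) = \alpha (f(b_1),b_2) + (\beta_1 - \beta)(b_1,b_2) =: R(b_1,b_2)$$
holds for all $b_1,b_2 \in V$. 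The left side is symmetric in $b_1,b_2$, so I first check that $R$ is symmetric: one computes $R(b_1,b_2) - R(b_2,b_1) = \phi_\alpha (f)(b_1,b_2) + (\overline{\beta_1} - \overline{\beta})(b_1,b_2) = \phi_\alpha (f+g)$, which vanishes because $f + g = 0$. It then remains to produce $\psi$ with coboundary $R$; when $p$ is odd, $\psi (b) = 2^{-1} R(b,b)$ works, since expanding $R(b_1+b_2,b_1+b_2)$ and using symmetry yields $\psi(b_1+b_2)-\psi(b_1)-\psi(b_2) = 2^{-1}(R(b_1,b_2)+R(b_2,b_1)) = R(b_1,b_2)$. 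Together with bijectivity this gives the desired isomorphism.

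The main obstacle is this last step, exhibiting $\psi$, and the real difficulty is the prime $2$. When $p = 2$ the antisymmetric part $\overline{\beta}$ satisfies $\overline{\beta}(b,b) = 0$ automatically, so the coset hypothesis carries no information about diagonals, yet any coboundary is forced to be alternating, since $\psi (b_1+b_2)-\psi(b_1)-\psi(b_2)$ evaluated at $b_1 = b_2 = b$ gives $\psi (2b) - 2\psi (b) = \psi (0) = 0$. Thus a $\psi$ with $\delta\psi = R$ exists only if $R(b,b) = \alpha (f(b),b) + (\beta_1 - \beta)(b,b)$ vanishes identically, an extra constraint reflecting the fact that for $p = 2$ the squaring map encodes $\beta (b,b)$. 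To resolve the even case I would exploit the remaining freedom of replacing $f$ by $f + k$ with $k \in \ker{\phi_\alpha}$ (which preserves the symmetry of $R$) and try to annihilate the diagonal $R(b,b)$, reducing matters to a solvability statement for $\alpha (k(b),b)$; I expect this diagonal bookkeeping, rather than the homomorphism verification, to be where the argument demands the most care.
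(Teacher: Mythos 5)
Your parts (2) and (3) are correct and coincide with the paper's treatment: the paper likewise reads (2) off from Lemma \ref{four} (your sign adjustment $\phi_\alpha(-f)$ is the right way to handle the discrepancy, harmless because $\phi_\alpha({\rm add}(V))$ is a subgroup), and gets (3) from the bijection $f \mapsto B_f$ of Lemma \ref{three}, under which the abelian members of ${\rm comp}(G,A)$ correspond exactly to $\ker{\phi_\alpha}$. For part (1) your route is genuinely different and, for odd $p$, more rigorous than the paper's. The paper's entire argument is to pass from the complement $B$ to the complement $B_f$ and assert $G = AB = AB_f \cong G(\alpha,\beta_1)$; this matches only commutator data (Lemma \ref{three}(3)), so on its face it establishes an isoclinism, and the promotion to an isomorphism tacitly uses the exponent-$p$ argument that the paper invokes elsewhere (Section 4, and Corollary \ref{seven}, both of which require $p$ odd). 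Your explicit shear $\Phi(a,b,c) = (a+f(b),b,c+\psi(b))$, with $\psi$ solving $\psi(b_1+b_2)-\psi(b_1)-\psi(b_2) = R(b_1,b_2)$ where $R(b_1,b_2) = \alpha(f(b_1),b_2) + (\beta_1-\beta)(b_1,b_2)$, produces the isomorphism outright; I verified the homomorphism computation, the symmetry of $R$ (which is exactly where the coset hypothesis enters), and the fact that $\psi(b) = 2^{-1}R(b,b)$ works when $p$ is odd. All of this is correct.

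The gap is $p = 2$. Hypothesis 1 allows $p = 2$, and your proof of (1) does not cover it: as you correctly observe, a solution $\psi$ exists if and only if $R$ is alternating, and the hypothesis controls only $\overline{\beta_1}-\overline{\beta}$, never the diagonal $R(b,b)$. Your proposed repair---replace $f$ by $f+k$ with $k \in \ker{\phi_\alpha}$ and solve $\alpha(k(b),b) = R(b,b)$---is not obviously sufficient: $\ker{\phi_\alpha}$ can be trivial (the paper itself notes this happens for many $\alpha$ when $m \ge 3$) while the needed diagonal is nonzero, in which case no shear-type map can work and one would have to consider isomorphisms that do not fix $A/G'$ pointwise. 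So, as a proof of the lemma as stated, your argument is incomplete at $p = 2$. You should know, however, that the paper's own proof has precisely the same hole: swapping $B$ for $B_f$ matches commutators but ignores the squaring maps $(0,b,c) \mapsto (0,0,\beta(b,b))$ and $(f(b),b,c) \mapsto (0,0,\alpha(f(b),b)+\beta_1(b,b))$, whose discrepancy is exactly your $R(b,b)$; for special $2$-groups the squaring map is an isomorphism invariant beyond the commutator structure, and neither Theorem \ref{one}(3) nor Corollary \ref{seven} is available at $p=2$ to dismiss it. In short: your diagnosis locates a real subtlety that the paper glosses over, but neither you nor the paper has actually proved part (1) for $p = 2$.
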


\begin{proof}
Suppose $\overline {\beta_1} \in \phi_\alpha ({\rm add} (V)) + \overline {\beta}$.  Then there is a map $f \in {\rm add} (V)$ so that $ \overline {\beta_1} = \phi_\alpha (f) + \overline {\beta}$.  We take $A$ and $B$ as above in $G(\alpha,\beta) = G$, and the corresponding subgroups in $G(\alpha,\beta_1)$ are $A$ and $B_f$ by Lemma \ref{three}. Since $G = AB = AB_f \cong G(\alpha,\beta_1)$, we have the desired isomorphism for conclusion (1).

Conclusion (2) follows immediately from Lemma \ref{four}.  For conclusion (3), we know that if $f \in {\rm add} (V)$, then $B_f \in {\rm comp} (G,A)$ is abelian if and only if $\phi_\alpha (f) = 0$ by Lemmas \ref{three} and \ref{four}.  This gives a bijection between abelian elements of ${\rm comp} (G,A)$ and elements of $\ker {\phi_\alpha}$.
\end{proof}

Notice that Lemma \ref{five} (1) shows that if $\overline{\beta}$ and $\overline{\beta_1}$ lie in the same coset of $\ker {\phi_\alpha}$ in ${\rm alt} (V,W)$, then $G(\alpha,\beta)$ and $G(\alpha,\beta_1)$ are isomorphic.  If $\overline{\beta} \in \phi_\alpha ({\rm add} (V))$, then this implies that $G(\alpha,\beta)$ is isomorphic to $G(\alpha,0)$.  In particular, this generalizes Lemma \ref{four} and we see that the number of abelian subgroups of $G(\alpha,0)$ whose product with $A$ is $G$ equals $|\ker{\phi_\alpha}|$ by Lemma \ref{five} (3).

If $W = V$ and $\alpha$ is commutative, Verardi showed in Corollary 5.9 of \cite{ver} that the number of abelian complements of $A$ equals the size of the middle nucleus of the semifield $(V,\alpha)$.  The middle nucleus of $(V,\alpha)$ is the set
$$\{ v \in V \mid \alpha (\alpha (u,v),w) = \alpha (u,\alpha(v,w)), ~ \forall u,w \in V \}.$$
Thus, it seems likely that there is a connection between $\ker {\phi_\alpha}$ and this middle nucleus of $(V,\alpha)$, but at this time, we have not determined this connection.

When $W$ has order $p$, we know that $G = G(\alpha,\beta)$ will be an extra-special group of order $p^{2n+1}$ where $|V| = p^n$.  This implies in this case that $\phi_\alpha ({\rm add} (V)) = {\rm alt} (V,W)$.  Notice that $|{\rm add} (V)| = p^{n^2}$ and $|{\rm alt} (V,W)| = p^{n(n-1)/2}$ in this case.  Since $n^2 - n(n-1)/2 = n(n+1)/2$, we have $|\ker{\phi_\alpha}| = p^{n(n+1)/2}$.  This shows that in $G$ there are $p^{n(n+1)/2}$ abelian subgroups whose product with $A$ gives $G$.  This implies that $G$ has at least $1 + p^{n(n+1)/2}$ abelian subgroups of order $p^{n+1}$ in $G$.  However, it is not difficult to see when $n \ge 2$ that there exist abelian subgroups of order $p^{n+1}$ in $G$ whose product with $A$ is not all of $G$, so this does not give a complete count of all of the abelian subgroups of $G$ of order $p^{n+1}$.

Let $V$ and $W$ be elementary abelian $p$-groups of order $p^n$ and $p^m$ respectively where $p$ is a prime and $n \ge m$ are positive integers.  Let $\alpha: V \times V \rightarrow W$ be a generalized nonsingular map.  If $\phi_\alpha ({\rm add} (V)) < {\rm alt} (V,W)$, then there will exist a biadditive map $\beta \in {\rm alt} (V,W) \setminus \phi_\alpha ({\rm add} (V))$.  Applying Lemma \ref{five} (2), we see that ${\rm comp} (G (\alpha,\beta),A)$ contains no abelian subgroups, and so there exist no abelian subgroups of $G$ whose product with $A$ is $G(\alpha,\beta)$.

We now show when $m \ge 3$ that there exist s.e.s. groups $G$ where $|G:G'| = p^{2n}$, $|G'| = p^m$, and $G$ has an abelian subgroup $A$ of order $p^{m+n}$ whose product with any other abelian subgroup of $G$ is proper in $G$. When $m > n/2$, we can use a result of Verardi to see that $A$ will be the only abelian subgroup of order $p^{m+n}$ in $G$.  Notice that the following result gives Theorem \ref{mainb} when $n = m$.

\begin{corollary} \label{fivea}
For every prime $p$ and for all integers $n \ge m \ge 3$, there exists an s.e.s. groups $G$ where $|G:G'| = p^{2n}$, $|G'| = p^m$, and $G$ has an abelian subgroup $A$ of order $p^{m+n}$ whose product with any other abelian subgroup of $G$ is proper in $G$.  When $m > n/2$, we have that $A$ will be the only abelian subgroup of order $p^{m+n}$ in $G$.
\end{corollary}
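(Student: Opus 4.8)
The plan is to realize $G$ via Theorem~\ref{one} as $G = G(\alpha,\beta)$ for a \emph{symmetric} generalized nonsingular map $\alpha : V \times V \to W$ and a biadditive map $\beta$ whose alternating form $\overline\beta$ is chosen to lie outside the image of $\phi_\alpha$. By Theorem~\ref{one} such a $G$ is automatically s.e.s. with $G' = Z(G)$, $|G:G'| = p^{2n}$, $|G'| = |W| = p^m$, and $A = \{(a,0,c) \mid a \in V,\, c \in W\}$ abelian of order $p^{n+m}$; so the entire content is to force that no abelian subgroup multiplies with $A$ to give all of $G$. Here Lemma~\ref{five}(2) is the key: ${\rm comp}(G,A)$ contains an abelian subgroup if and only if $\overline\beta \in \phi_\alpha({\rm add}(V))$. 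Thus it suffices to (i) produce $\alpha$ with $\phi_\alpha({\rm add}(V))$ a \emph{proper} subgroup of ${\rm alt}(V,W)$, and (ii) choose $\beta$ with $\overline\beta$ outside that image.

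For (i), I would first record that a symmetric generalized nonsingular map exists for every $m \le n$: identify $V$ with the additive group of the field $K$ of order $p^n$, fix a surjective linear map $\pi : K \to W$ (available since $m \le n$), and set $\alpha(a,b) = \pi(ab)$. Commutativity of $K$ makes $\alpha$ symmetric, and $aK = K$ for $a \ne 0$ together with surjectivity of $\pi$ gives $\alpha(a,V) = \alpha(V,a) = W$, so $\alpha$ is generalized nonsingular. The gain from symmetry is that $\phi_\alpha(1_V)(v_1,v_2) = \alpha(v_1,v_2) - \alpha(v_2,v_1) = 0$, so the identity map $1_V$ lies in $\ker{\phi_\alpha}$ and $\dim\phi_\alpha({\rm add}(V)) = n^2 - \dim\ker{\phi_\alpha} \le n^2 - 1$. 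Comparing with $\dim{\rm alt}(V,W) = mn(n-1)/2$ and using $m \ge 3$, the inequality $n^2 - 1 < mn(n-1)/2$ reduces to $(n-1)(n-2)/2 > 0$, which holds for all $n \ge 3$; hence $\phi_\alpha({\rm add}(V))$ is proper. I expect this count to be the crux. Without the symmetry trick the naive bound $\dim\phi_\alpha({\rm add}(V)) \le n^2$ only beats $mn(n-1)/2$ when $(n,m) \ne (3,3)$, and it is exactly the boundary case $n = m = 3$, where $n^2 = mn(n-1)/2 = 9$, that forces shaving off one dimension via $\ker{\phi_\alpha}$.

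For (ii), since every alternating map arises as $\overline\beta$ for some biadditive $\beta$ (for odd $p$ take $\beta = 2^{-1}\gamma$; in general fix an ordered basis and set $\beta(e_i,e_j) = \gamma(e_i,e_j)$ for $i < j$ and $0$ otherwise, then verify $\overline\beta = \gamma$), I can pick $\gamma \in {\rm alt}(V,W) \setminus \phi_\alpha({\rm add}(V))$ and a biadditive $\beta$ with $\overline\beta = \gamma$. Setting $G = G(\alpha,\beta)$, Lemma~\ref{five}(2) gives that ${\rm comp}(G,A)$ has no abelian member, i.e. there is no abelian $D$ with $G = AD$ and $A \cap D = G'$. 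To upgrade this to ``$AD$ is proper for every abelian subgroup $D$,'' suppose $AD = G$ with $D$ abelian and set $E = DG'$; since $G' = Z(G)$ is central, $E$ is abelian, contains $G'$, and $AE = (AD)G' = G$. The order formula gives $|E| = |A \cap E|\cdot|G|/|A| = p^n|A \cap E| \ge p^{n+m}$ (as $G' \le A \cap E$), while Verardi's bound (Theorem~1.8 of \cite{ver}) caps any abelian subgroup at $|G:G'|^{1/2}|G'| = p^{n+m}$. Hence $|E| = p^{n+m}$ and $A \cap E = G'$, so $E \in {\rm comp}(G,A)$ is abelian, a contradiction. Thus $AD$ is proper for every abelian subgroup $D$.

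Finally, for the last sentence I would assume $m > n/2$, equivalently $|W|^2 > |V|$, and let $D$ be any abelian subgroup of order $p^{n+m}$ with $D \ne A$. Both $A$ and $D$ are abelian of the maximal possible order $p^{n+m}$, and under the hypothesis $|W|^2 > |V|$ the result of Verardi recalled in Section~\ref{complements} (two distinct abelian subgroups of maximal order have product $G$; see Theorem~1.9 of \cite{ver}) forces $AD = G$. This contradicts the previous paragraph, so no such $D$ exists and $A$ is the unique abelian subgroup of order $p^{n+m}$ in $G$.
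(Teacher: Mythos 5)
Your proof is correct, and while its skeleton matches the paper's (build $\alpha$ from the field of order $p^n$, exhibit $\phi_\alpha({\rm add}(V))$ as a proper subgroup of ${\rm alt}(V,W)$, choose $\overline\beta$ outside it, apply Lemma~\ref{five}(2), and quote Verardi for the $m > n/2$ statement), your execution differs at the crux in a way worth noting. The paper splits into cases: when $m \ge 4$, or $m = 3 < n$, it uses the raw count $n^2 < mn(n-1)/2$ so that \emph{any} $\alpha$ works, and in the boundary case $n = m = 3$ it instead invokes the identification of $|\ker\phi_\alpha|$ with the middle nucleus of the field (via Verardi's Corollary 5.9 together with Lemma~\ref{five}(3)) to get $|\ker\phi_\alpha| = p^3 > 1$. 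Your observation that symmetry of the field-based $\alpha$ puts $1_V$ in $\ker\phi_\alpha$, so that $\dim\phi_\alpha({\rm add}(V)) \le n^2 - 1 < mn(n-1)/2$ whenever $n \ge m \ge 3$ (the inequality reducing to $(n-1)(n-2)/2 > 0$), handles all cases uniformly and is self-contained -- it needs only the trivial computation $\phi_\alpha(1_V) = 0$, not the middle-nucleus connection (which the paper itself elsewhere describes as not fully determined). You are also more careful in two places where the paper is terse. First, the paper selects $\beta$ itself in ${\rm alt}(V,W) \setminus \phi_\alpha({\rm add}(V))$ and applies Lemma~\ref{five}(2); since that lemma is a condition on $\overline\beta = 2\beta$, this literally fails at $p = 2$ (where $\overline\beta = 0$), whereas your basis construction realizing an arbitrary alternating $\gamma$ as $\overline\beta$ works for every prime. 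Second, your $E = DG'$ argument, using Verardi's bound $p^{n+m}$ on abelian subgroups to force $E \in {\rm comp}(G,A)$, explicitly upgrades ``${\rm comp}(G,A)$ has no abelian member'' to ``$AD < G$ for every abelian $D$,'' a step the paper asserts without proof. The only thing the paper's route buys that yours does not is the sharper information in the $n = m = 3$ case that $|\ker\phi_\alpha| = p^3$ exactly; for the corollary as stated, your weaker bound $|\ker\phi_\alpha| > 1$ suffices.
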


\begin{proof}
For every prime $p$ and integers $n \ge m \ge 3$, we need to find a generalized nonsingular map $\alpha : V \times V \rightarrow W$ so that $\phi_\alpha ({\rm add} (V)) < {\rm alt} (V,W)$.  When either $m \ge 4$ or $n \ge 4$ and $m = 3$, we have that $n (n-1)m /2 > n^2$, so $|{\rm add} (V)| < |{\rm alt} (V,W)|$ and hence, any $\alpha$ will work.  When $m = 3 = n$, we see that $|{\rm add} (V)| = p^9 =  |{\rm alt} (V,W)|$.  Hence, it suffices to find $\alpha$ so that $|\ker{\phi_\alpha}| > 1$.  To see that there exists an $\alpha$ in all cases, let $F$ be the field of order $p^n$.  Take $V$ to be the additive group of $F$, write $U$ for a subgroup of $V$ of order $p^{n-m}$, and set $W = V/U$.  We define $\alpha$ by $\alpha (v_1,v_2) = v_1 v_2 + U$ where the multiplication is the multiplication from $F$.  It is easy to see that $\alpha$ will be a nonsingular map.  If $n = m = 3$, then $U = 0$ and $W = V$.  We saw earlier that $|\ker {\phi_\alpha}|$ equals the size of the middle nucleus of $F$, but since $F$ is a field, the middle nucleus is $F$, so $|\ker {\phi_\alpha}| = |V| = p^3 > 1$.  This proves the corollary.
\end{proof}

As can be seen by the proof of Corollary \ref{fivea}, when either $m \ge 4$ or $n > m = 3$, we see that for every generalized nonsingular map $\alpha$, that $\phi_\alpha ({\rm add} (V)) < {\rm alt} (V,W)$.  When $n = m = 3$, we can find for $p = 5$, $p=7$, and probably larger primes $\alpha$'s that have $|\phi_\alpha ({\rm add})| = |{\rm alt} (V,W)|$, so not all $\alpha$'s will work.  Interestingly, we can find when $m = 2$ and $n = 4$ an $\alpha$ so that $|\phi_\alpha ({\rm add})| < |{\rm alt} (V,W)|$.

Given a generalized nonsingular map $\alpha : V \times V \rightarrow W$, we have shown that if $\hat\beta_1$ and $\hat\beta_2$ are in the same coset $\phi_\alpha ({\rm add} (V))$ in ${\rm alt} (V,W)$, then $G(\alpha,\beta_1)$ and $G (\alpha,\beta_2)$ are isomorphic, and if $\hat\beta_1$ does not lie in $\phi_\alpha ({\rm add} (V))$, then $G(\alpha,\beta_1)$ is not isomorphic to $G(\alpha,0)$.  However, in general if $\hat\beta_1$ and $\hat\beta_2$ lies in different cosets of $\phi_\alpha ({\rm add} (V))$ it need not be the case that $G(\alpha,\beta_1)$ and $G(\alpha,\beta_2)$ are not isomorphic.

In particular, when $p = 3$ and $\alpha$ is the nonsingular map coming from the field of order $3^3$, so $|V| = 3^3$.  In this case, it is not difficult to see that $|{\rm alt} (V,V)| = 3^9$ and $|\phi_\alpha ({\rm add} (V))| = 3^6$.  Hence, there are $27$ cosets in this case.  Josh Maglione has written a program in the computer algebra system Magma \cite{magma} that computes the $27$ cosets and the associated generalized semifield groups using the package eMagma \cite{package}.  Also, using Magma and eMagma, Josh was able to show that the $26$ generalized semifield groups that are not the semifield group are all isomorphic.  On the other hand, when $|V| = 3^4$, we can find semifields where the associated generalized semifield groups are not all isomorphic.

\section{Obtaining generalized nonsingular maps from groups}

We have seen how to construct s.e.s. groups with an abelian subgroup of maximal possible order.  We now show that every s.e.s. group with an abelian subgroup of maximal possible order can be be obtained this way.  We begin by determining the generalized nonsingular map for such a group.

\begin{lemma} \label{six}
Let $G$ be a semi-extraspecial group where $|Z(G)| = p^m$ and $|G:Z(G)| = p^{2n}$.  Suppose $A$ is an abelian subgroup of order $p^{n+m}$.  Then the following are true:
\begin{enumerate}
\item The map $[,] : A/Z(G) \times G/A \rightarrow Z(G)$ defined by $[aZ(G),gA] = [a,g]$ is well-defined and bilinear.
\item For every element $a \in A \setminus G'$ and every element $z \in Z(G)$, there exists an element $g \in G$ such that $[a,g] = [aZ(G),gA] =  z$.
\item For every element $g \in G \setminus A$, we have $G = A C_G (g)$.  Furthermore, for every element $z \in Z(G)$, there exists an element $a \in A$ so that $[a,g] = [aZ(G),gA] = z$.
\end{enumerate}
\end{lemma}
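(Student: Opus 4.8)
The plan is to handle the three parts in order, with essentially all the difficulty concentrated in part (3). Throughout I will use that a semi-extraspecial group is special, so that $G' = Z(G) = \Phi(G)$ and $Z(G)$ is elementary abelian; in particular I may regard $M = G/G'$ and $Z(G)$ as vector spaces over the field of order $p$, of dimensions $2n$ and $m$ respectively. I will also record two preliminaries. First, since $A$ is abelian and $Z(G)$ is central, $AZ(G)$ is abelian of order at least $|A| = p^{n+m}$, which is the maximal possible order by the bound of Verardi (Theorem 1.8 of \cite{ver}); hence $Z(G) \le A$ and $\bar U := A/G'$ is a subspace of $M$ of dimension $n$. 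Second, the commutator descends to an alternating bilinear map $\Psi : M \times M \to Z(G)$, $\Psi(xG',yG') = [x,y]$, and the characterization of semi-extraspecial groups recalled in the proof of Theorem~\ref{one} says precisely that $\Psi(\bar x, -) : M \to Z(G)$ is surjective for every $\bar x \ne 0$.

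For part (1) I would observe that $[a,g] \in G'$ always, that replacing $a$ by $az$ with $z \in Z(G)$ leaves $[a,g]$ unchanged (as $z$ is central), and that replacing $g$ by $ga'$ with $a' \in A$ leaves $[a,g]$ unchanged (as $A$ is abelian); bilinearity is then the standard commutator calculus in a class-two group. This is exactly the assertion that $\Psi$ restricted to $\bar U \times (G/A)$ is well defined and bilinear. Part (2) is immediate from the semi-extraspecial property: an element $a \in A \setminus G'$ lies in $G \setminus G'$, so for each $z \in Z(G)$ there is $g \in G$ with $[a,g] = z$, and by part (1) this equals $[aZ(G),gA]$.

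The heart of the lemma is part (3), and the key claim is the \emph{dual} surjectivity: for $g \in G \setminus A$ the map $a \mapsto [a,g]$ carries $A$ onto $Z(G)$; equivalently $\Psi(\bar g, \bar U) = Z(G)$, where $\bar g = gG' \notin \bar U$ since $g \notin A$. The plan is to reduce this to the classical fact that a Lagrangian subspace of a nondegenerate symplectic space is self-perpendicular. For each nonzero functional $\lambda \in Z(G)^{*}$ the scalar form $\lambda \circ \Psi : M \times M \to \mathbb{F}_p$ is alternating and nondegenerate: since $\Psi(\bar x,-)$ is onto $Z(G)$ for $\bar x \ne 0$, $\lambda \circ \Psi(\bar x,-)$ is onto $\mathbb{F}_p$, hence nonzero. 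As $A$ is abelian, $\bar U$ is totally isotropic for $\Psi$, so it is totally isotropic for each $\lambda \circ \Psi$; and since $\dim \bar U = n = \tfrac12\dim M$, it is a Lagrangian, hence its own perpendicular for every $\lambda \circ \Psi$. Now if $\Psi(\bar g, \bar U)$ were a proper subspace $Z_0 \subsetneq Z(G)$, I could choose $\lambda \ne 0$ vanishing on $Z_0$; then $\lambda \circ \Psi(\bar g, \bar U) = 0$, so $\bar g$ lies in the perpendicular of $\bar U$ for $\lambda \circ \Psi$, which equals $\bar U$, forcing $g \in A$, a contradiction. Hence $\Psi(\bar g, \bar U) = Z(G)$, the ``furthermore'' assertion. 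Finally, since $\Psi(\bar g,-)$ is onto $Z(G)$ we get $|C_G(g)| = |G|/|Z(G)| = p^{2n}$, and since $a \mapsto [a,g]$ is onto $Z(G)$ its kernel $A \cap C_G(g)$ has order $|A|/|Z(G)| = p^{n}$; the product formula then gives $|A\,C_G(g)| = p^{n+m}\cdot p^{2n}/p^{n} = p^{2n+m} = |G|$, so $G = A\,C_G(g)$.

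I expect the dual surjectivity in part (3) to be the main obstacle. One-sided surjectivity (part (2)) is cheap, but for a $Z(G)$-valued form surjectivity in one variable does not formally imply it in the other — this is exactly why a generalized nonsingular map is required to satisfy both conditions separately — so genuine group-theoretic input is needed, here the self-centralization of the maximal abelian subgroup, encoded as $\bar U$ being Lagrangian for every scalar specialization $\lambda\circ\Psi$. The only delicate routine points should be tracking which commutator argument goes in which variable and keeping the sign straight in $[a,g] = \Psi(\bar a,\bar g) = -\Psi(\bar g,\bar a)$.
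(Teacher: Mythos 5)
Your proof is correct, and parts (1) and (2) coincide with the paper's argument, but for part (3) — where all the content lies — you take a genuinely different route. The paper splits into two cases: when $m = n$ it notes that $A = C_G(a)$ for $a \in A \setminus Z(G)$, deduces $C_G(g) \cap A = G'$, and counts orders to get $G = AC_G(g)$; when $m < n$ it considers the subgroup $[A,g] \le Z(G)$, and if this were proper it passes to the quotient $G/[A,g]$ — using that such a quotient is again semi-extraspecial — and derives a contradiction because the abelian subgroup $\langle A,g\rangle/[A,g]$ would exceed Verardi's bound (Theorem 1.8 of \cite{ver}). In both cases the paper obtains the factorization $G = AC_G(g)$ \emph{first} and then deduces the surjectivity of $a \mapsto [a,g]$ by writing $x = ab$ with $a \in A$, $b \in C_G(g)$ for any $x$ satisfying $[x,g] = z$. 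You argue in the opposite order and with no case split: you scalarize the $Z(G)$-valued commutator form by composing with linear functionals $\lambda$, observe that each $\lambda \circ \Psi$ is a nondegenerate symplectic form on $G/G'$ for which $\bar U = A/G'$ is Lagrangian (hence self-perpendicular), and conclude that a proper image $\Psi(\bar g,\bar U) \subsetneq Z(G)$ would be annihilated by some $\lambda \ne 0$, forcing $\bar g \in \bar U$, a contradiction; the factorization $G = AC_G(g)$ then follows by counting. What your approach buys: it is uniform in $m$ and $n$, it avoids invoking the (true, but nowhere proved in this paper) fact that quotients of s.e.s.\ groups by central subgroups are again s.e.s., and it confines the use of Verardi's bound to the preliminary observation $Z(G) \le A$ — a point the statement tacitly assumes and the paper's proof never addresses. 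What the paper's approach buys: it stays entirely inside group theory with no dual-space machinery, and the quotient argument displays directly how the maximality of $|A|$ is used. Both proofs are complete; yours is arguably the cleaner mechanism, and your closing remark about one-sided versus two-sided surjectivity correctly identifies why genuine input beyond part (2) is needed.
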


\begin{proof}
Fix elements $a \in A$ and $g \in G$.  Suppose $z \in Z(G)$ and $b \in A$, then $[az,gb] = [a,gb]^z [z,gb] = [a,gb] = [a,b][a,g]^b = [a,g]$ where since $A$ is abelian we have $[a,b] = 1$ and $[a,g] \in G' = Z(G)$ implies $[a,g]^b = [a,g]$.  This shows that the map is well-defined.  Suppose $a_1,a_2 \in A$, then $[a_1a_2,g] = [a_1,g]^{a_2} [a_2,g] = [a_1,g][a_2,g]$ and if $g_1,g_2 \in G$, then $[a,g_1g_2] = [a,g_2][a,g_1]^{g_2} = [a,g_2][a,g_1] = [a,g_1][a,g_2]$.  Thus, the map is bilinear.  Since $G$ is semi-extraspecial, for every element $a \in A \setminus G'$ and $z \in Z(G)$, there exist an element $g \in G$ so that $[a,g] = z$.

Suppose for $g \in G \setminus A$.  We claim that $G = A C_G (g)$.  If $m = n$, then $A = C_G (a)$ for every $a \in A \setminus Z(G)$.  It follows that $C_G (g) \cap A = G'$, and since $p^n = |G:C_G (g)| = |A:G'|$, we conclude that $G = A C_G (g)$.

We may assume that $m < n$.  Observe that $[A,g] = [A,\langle g, Z(G) \rangle]$ is a subgroup of $Z(G) = G'$, and so, it is a normal subgroup of $G$.  We claim that $Z(G) = [A,g]$.  If $[A,g] < Z(G)$, then $\langle A, g \rangle/[A,g]$ will be an abelian subgroup of $G/[A,g]$.  Notice that $G/[A,g]$ is a semi-extraspecial group and $|\langle A, g \rangle/[A,g]| = p |A:[A,g]| = p p^n |G':[A,g]|$.  We saw in the Introduction that the maximal size of abelian subgroup of $G/[A,g]$ is $|G:G'|^{1/2} |G':[A,g]| = p^n |G':[A,g]|$, so this is a contradiction.  It follows that $Z(G) = [A,g]$.  Notice that the map $a \mapsto [a,g]$ is a surjective homomorphism from $A$ to $Z(G)$ whose kernel is $C_A (g)$.  This implies that $|A:C_A (g)| = |Z(G)| = p^m$.  Since $|G:C_G (g)| = p^m$ and $C_A (g) = C_G (g) \cap A$, we conclude that $G = A C_G (g)$.  This proves the claim.

Fix the element $z \in Z (G)$.  Since $G$ is semi-extraspecial, there exists an element $x \in G$ so that $[x,g] = z$.  We can write $x = ab$ for some $a \in A$ and $b \in C_G (g)$.  We have $[x,g] = [ab,g] = [a,g]^b [b,g] = [a,g]$ where the last equality holds since $b \in C_G (g)$.  We conclude that $[a,g] = [x,g] = z$ as desired.
\end{proof}

Note that if $G$ is the group $G (\alpha,\beta)$ considered in Theorem \ref{one}, then $[(a,0,0),(a',b,0)] = (0,0,\alpha (a,b))$ for all $a, a',b \in V$.

\begin{corollary}\label{seven}
Let $G$ be a s.e.s. group where $|Z(G)| = p^m$ and $|G:Z(G)| = p^{2n}$.  Suppose $A$ is an abelian subgroup of order $p^{n+m}$.  Let $V$ be an elementary abelian $p$-group of order $p^n$ and $W$ an elementary abelian $p$-group of order $p^m$.  Let $\delta, \sigma, \tau$ be isomorphisms so that $\delta : V \rightarrow A/Z(G)$, $\sigma : V \rightarrow G/A$, and $\tau : Z(G) \rightarrow W$.  If $\alpha_{G,A} : V \times V \rightarrow W$ is defined by $\alpha_{G,A} (v,w) = \tau ([\delta (v),\sigma (w)])$, then $\alpha_{G,A}$ is a generalized nonsingular map from $V$ to $W$.

Let $B$ be a subgroup of $G$ so that $A \cap B = Z(G)$ and $G = AB$, and let $\eta : V \rightarrow B/Z(G)$ be the map obtained by composing $\sigma$ with the natural map from $G/A$ to $B/Z (G)$.  Assume $p$ is odd.  Define $\beta_B : V \times V \rightarrow W$ by $\beta_B (v_1,v_2) = 1/2 (\tau ([\eta (v_1),\eta (v_2)]))$.  Then $G$ is isoclinic to $G (\alpha_{G,A},\beta_B)$.  If $A$ and $B$ have exponent $p$, then $G$ is isomorphic to $G(\alpha_{G,A},\beta_B)$.
\end{corollary}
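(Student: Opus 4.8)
The plan is to establish the three assertions of the corollary in turn: that $\alpha_{G,A}$ is a generalized nonsingular map; that $G$ is isoclinic to $\hat G := G(\alpha_{G,A},\beta_B)$; and that this isoclinism becomes an isomorphism once $A$ and $B$ have exponent $p$. Throughout I use that $G$ is a special group, so $G' = Z(G)$ is elementary abelian and $G$ has class two.

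For the first assertion, biadditivity of $\alpha_{G,A}$ is immediate from Lemma~\ref{six}(1) together with the fact that $\delta,\sigma,\tau$ are isomorphisms of (additive) elementary abelian groups. For nonsingularity I must verify $\alpha_{G,A}(v,V) = \alpha_{G,A}(V,v) = W$ whenever $v \ne 0$. Fixing $v \ne 0$ and choosing $a \in A$ with $aZ(G) = \delta(v)$, so $a \notin Z(G) = G'$, Lemma~\ref{six}(2) produces for each $z \in Z(G)$ an element $g$ with $[a,g] = z$; since $\sigma$ is onto $G/A$ and $\tau$ is onto $W$, this yields $\alpha_{G,A}(v,V) = W$. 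Symmetrically, fixing $g$ with $gA = \sigma(v)$, so $g \notin A$, and applying Lemma~\ref{six}(3) gives $\alpha_{G,A}(V,v) = W$.

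For the isoclinism, since $G/Z(G)$ is elementary abelian and $G = AB$ with $A \cap B = Z(G)$, the quotient $G/Z(G)$ is the internal direct product $(A/Z(G)) \times (B/Z(G))$; let $\pi_A,\pi_B$ denote the two projections. I define $a : G/Z(G) \to \hat G/Z(\hat G) \cong V \times V$ by $a(gZ(G)) = (\delta^{-1}(\pi_A(gZ(G))),\, \eta^{-1}(\pi_B(gZ(G))))$ and $b : G' \to \hat G'$ by $b(z) = (0,0,\tau(z))$; both are isomorphisms. Writing $g_i = a_i b_i$ with $a_i \in A$, $b_i \in B$ and setting $\delta(u_i) = a_i Z(G)$, $\eta(w_i) = b_i Z(G)$, one has $a(g_i Z(G)) = (u_i,w_i)$. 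The core computation is the class-two commutator identity
$$[g_1,g_2] = [a_1 b_1, a_2 b_2] = [a_1,b_2]\,[b_1,a_2]\,[b_1,b_2],$$
using $[a_1,a_2] = 1$ since $A$ is abelian. Applying $\tau$ and using $\alpha_{G,A}(u_1,w_2) = \tau([a_1,b_2])$, $\alpha_{G,A}(u_2,w_1) = \tau([a_2,b_1])$, and $[b_1,a_2] = [a_2,b_1]^{-1}$, the image $\tau([g_1,g_2])$ becomes $\alpha_{G,A}(u_1,w_2) - \alpha_{G,A}(u_2,w_1) + \tau([b_1,b_2])$.

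The one point requiring care, and the step I expect to be the main obstacle, is identifying $\tau([b_1,b_2])$ with the term $\overline{\beta_B}(w_1,w_2)$ from Theorem~\ref{one}(2): this is exactly where the factor $\frac12$ in the definition of $\beta_B$ is consumed. Since $\tau([b_2,b_1]) = -\tau([b_1,b_2])$, I get $\overline{\beta_B}(w_1,w_2) = \beta_B(w_1,w_2) - \beta_B(w_2,w_1) = \frac12\tau([b_1,b_2]) + \frac12\tau([b_1,b_2]) = \tau([b_1,b_2])$. Comparing with the $W$-coordinate produced by Theorem~\ref{one}(2) for $[a(g_1Z(G)),a(g_2Z(G))]$, this gives precisely $b([g_1,g_2]) = [a(g_1Z(G)),a(g_2Z(G))]$, so $G$ is isoclinic to $\hat G$. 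Finally, if $A$ and $B$ have exponent $p$, then for any $g = ab$ the class-two power formula gives $(ab)^p = a^p b^p [b,a]^{{p \choose 2}} = 1$, since $a^p = b^p = 1$, $[b,a] \in Z(G)$ has order dividing $p$, and $p \mid {p \choose 2}$ for odd $p$; hence $G$ has exponent $p$, as does $\hat G$ by Theorem~\ref{one}(3). Two isoclinic $p$-groups of exponent $p$ are isomorphic (the Universal Coefficients Theorem argument recorded in the proof of Theorem~\ref{maina}, cf.\ \cite{warfield}), so $G \cong G(\alpha_{G,A},\beta_B)$.
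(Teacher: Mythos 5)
Your proposal is correct and takes essentially the same route as the paper: nonsingularity of $\alpha_{G,A}$ via Lemma \ref{six}(2) and (3), the isoclinism via the decomposition $G/Z(G) = A/Z(G) \times B/Z(G)$ and the key cancellation $\overline{\beta_B}(w_1,w_2) = \tau([\eta(w_1),\eta(w_2)])$ that absorbs the factor $\frac{1}{2}$, and the exponent-$p$ upgrade via the isoclinism-implies-isomorphism fact from \cite{warfield}. The only cosmetic differences are that your isoclinism runs from $G$ to $G(\alpha_{G,A},\beta_B)$ rather than the reverse, and that you explicitly verify (as the paper leaves implicit) that $G = AB$ has exponent $p$ when $A$ and $B$ do.
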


\begin{proof}
Fix $w \in W$ and fix $v \in V$.  Let $z = \tau^{-1} (w)$.  First, we have $\delta (v) = a Z(G)$ for some $a \in A$.  By Lemma \ref{six} (2), there exists $g \in G$ so that $[aZ(G), gA] = z$.  Let $u = \sigma^{-1} (gA)$.  Then $$\alpha_{G,A} (v,u) = \tau ([\delta (v),\sigma(u)]) = \tau ([aZ(G),gA]) = \tau (z) = w.$$
This shows that $\alpha_{G,A} (v,V) = W$.  Next, we have $\sigma (v) = hA$ for some $h \in G$.  By Lemma \ref{six} (3), there exists $b \in A$ so that $[bZ(G),hA] = z$.  Let $u' = \delta^{-1} (bZ(G))$.  Then, $$\alpha_{G,A} (u',v) = \tau ([\delta (u'),\sigma(v)]) = \tau ([bZ(G),hA]) = \tau (z) = w.$$
This shows that $\alpha_{G,A} (V,v) = W$.  This proves that $\alpha_{G,A}$ is a generalized nonsingular map.

Since $A \cap B = Z(G)$, we see that $G/Z (G) = A/Z(G) \times B/Z(G)$.  Thus, the map $\gamma:(v,w,z) \mapsto (\delta (v), \eta (w)$ is an isomorphism from $G(\alpha_{G:A},\beta_B)/Z (G(\alpha_{G:A},\beta_B)) \rightarrow A/Z(G) \times B/Z(G)$.  We claim that $(\gamma,\tau^{-1})$ is an isoclinism from $G(\alpha_{G:A},\beta_B)$ to $G$.  We have
$$
[\gamma(v_1,w_1,z_1),\gamma(v_2,w_2,z_2)] = [(\delta(v_1),\eta(w_1)),(\delta(v_2),\eta(w_2))] =
$$
$$
[\delta (v_1),\eta (w_2)] + [\eta(w_1),\delta(v_2)] + [\eta(w_1),\eta(w_2)] =
$$
$$
\tau^{-1} (\alpha_{G,A} (v_1,w_2)) - \tau^{-1} (\alpha_{G,A} (v_2,w_1)) + [\eta(w_1),\eta(w_2)].
$$
On the other hand,
$$[(v_1,\!w_1,\!z_1\!),(v_2,\!w_2,\!z_2\!)]\! = \!(0,0,\alpha_{G,A} (v_1,w_2)\! -\! \alpha_{G,A}(v_2,w_1)\! + \! \overline{\beta_B(w_1,w_2)}).$$
Notice that
$$\overline{\beta_B (w_1,w_2)}\! = \!\beta_B (w_1,w_2)\! - \!\beta_B (w_2,w_1)\! =\! \frac 12 [\eta(w_1),\eta(w_2)] \!-\! \frac 12 [\eta(w_2),\eta (w_1)].$$
Since $[\eta(w_2),\eta(w_1)] = - [\eta (w_1), \eta(w_2)]$, we get the required equality for the isoclinism.
\end{proof}

Notice that if $B$ is abelian, then $\overline{\beta_B} = 0$ and so $G(\alpha_A,\beta_B) \cong G(\alpha_A)$.  Hence, Theorem \ref{maind} may be viewed as a corollary of Corollary \ref{seven}.

\section{Two abelian subgroups whose product is $G$} \label{two abelian}

Notice that $\alpha_{G,A}$ in Corollary \ref{seven} depends on the choice of $\delta$, $\sigma$, and $\tau$.  With this in mind, we make the following definition. Let $\alpha_1$ and $\alpha_2$ be generalized nonsingular maps from $V \times V$ to $W$.  We say that $\alpha_1$ and $\alpha_2$ are {\rm isotopic} if there exist isomorphisms $a,b : V \rightarrow V$ and $c: W \rightarrow W$ so that $\alpha_2 (a(v_1),b(v_2)) = c (\alpha_1 (v_1,v_2))$ for all $v_1, v_2 \in V$.  Note that when $\alpha_1$ and $\alpha_2$ are nonsingular maps and we translate to the associated semifields this is the normal definition of isotopism of semifields.  It is not difficult to see that this definition of isotopism will yield an equivalence relation on generalized nonsingular maps.  In light of Corollary \ref{seven}, $\alpha_{G,A}$ is uniquely defined up to isotopism.  We now show that isotopic generalized nonsingular maps yields isomorphisms of generalized semifield groups.

\begin{lemma}\label{eight}
Let $V$ and $W$ be elementary abelian $p$-groups of order $p^n$ and $p^m$ respectively where $p$ is a prime and $m \le n$ are positive integers.  Let $\alpha_1$ and $\alpha_2$ be generalized nonsingular maps from $V \times V$ to $W$, and let $(a,b,c)$ be an isotopism from $\alpha_1$ to $\alpha_2$.  If $\beta_1$ and $\beta_2$ are biadditive maps from $V \times V$ to $W$ that satisfy $\beta_2 (b(v_1),b(v_2)) = c(\beta_1 (v_1,v_2))$ for all $v_1,v_2 \in V$, then the map $\gamma : G(\alpha_1,\beta_1) \rightarrow G (\alpha_2,\beta_2)$ defined by $\gamma (u,v,w) = (a(u),b(v),c(w))$ is an isomorphism of groups.
\end{lemma}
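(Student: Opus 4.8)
The plan is to verify directly that $\gamma$ respects the multiplication on $G(\alpha_i,\beta_i)$ defined in Theorem \ref{one}, and then to observe that it is a bijection. Recall that an element of $G(\alpha_i,\beta_i)$ is a triple $(u,v,w)$ with $u,v \in V$ and $w \in W$, and that
$$
(u_1,v_1,w_1)(u_2,v_2,w_2) = (u_1+u_2,\,v_1+v_2,\,w_1+w_2+\alpha_i(u_1,v_2)+\beta_i(v_1,v_2)).
$$
First I would apply $\gamma$ to a product computed in $G(\alpha_1,\beta_1)$. Since $a$, $b$, and $c$ are additive, $\gamma$ distributes over each coordinate, and in particular the third coordinate of $\gamma\big((u_1,v_1,w_1)(u_2,v_2,w_2)\big)$ becomes $c(w_1)+c(w_2)+c(\alpha_1(u_1,v_2))+c(\beta_1(v_1,v_2))$.

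Next I would compute the product of the images $\gamma(u_1,v_1,w_1)=(a(u_1),b(v_1),c(w_1))$ and $\gamma(u_2,v_2,w_2)=(a(u_2),b(v_2),c(w_2))$ inside $G(\alpha_2,\beta_2)$. Because $a$ and $b$ are additive, the first two coordinates agree with those above, and the third coordinate is $c(w_1)+c(w_2)+\alpha_2(a(u_1),b(v_2))+\beta_2(b(v_1),b(v_2))$. The key step is then to match the two third coordinates: the isotopism hypothesis gives $\alpha_2(a(u_1),b(v_2)) = c(\alpha_1(u_1,v_2))$, and the compatibility hypothesis on the $\beta$'s gives $\beta_2(b(v_1),b(v_2)) = c(\beta_1(v_1,v_2))$. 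Substituting both relations shows the third coordinates coincide, so $\gamma$ is a homomorphism. Here it is essential that in the product formula $\alpha$ is evaluated on a first coordinate paired with a second coordinate (so that the pair $(a,b)$ applies, exactly matching the isotopism relation), whereas $\beta$ is evaluated on two second coordinates (so that $b$ applies to both arguments, exactly matching the $\beta$-compatibility relation).

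Finally, since $a$, $b$, and $c$ are isomorphisms, each coordinate of $\gamma$ is bijective, so $\gamma$ is a bijection and hence an isomorphism of groups. I do not anticipate any genuine obstacle: the argument is a direct verification, and the only point requiring care is the bookkeeping that aligns the two hypotheses with the $\alpha$- and $\beta$-terms of the multiplication, as noted above.
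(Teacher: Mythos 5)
Your proposal is correct and matches the paper's own proof essentially step for step: both compute $\gamma(g_1g_2)$ and $\gamma(g_1)\gamma(g_2)$ directly, match the first two coordinates via additivity of $a$ and $b$, and match the third coordinate using additivity of $c$ together with the isotopism relation for $\alpha$ and the compatibility relation for $\beta$, with bijectivity following from $a$, $b$, $c$ being isomorphisms. No gaps.
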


\begin{proof}
It is easy to see that $\gamma$ will be a bijection.  Consider $g_1 = (u_1,v_1,w_1), g_2 = (u_2,v_2,w_2) \in G(\alpha_1,\beta_1)$.  Observe that $\gamma (g_1 g_2)$ equals
$$
(a(u_1+u_2), b(v_1+v_2), c(w_1 + w_2 + \alpha_1 (u_1,v_2) + \beta_1 (v_1,v_2))).
$$
On the other hand, $\gamma(g_1)\gamma(g_2)$ equals
$$
(a(\!u_1\!)+a(\!u_2\!), b(\!v_1\!)+b(\!v_2\!), c(\!w_1\!)+c(\!w_2\!) \!+ \!\alpha_2 (\!a(\!u_1\!),b(\!v_2))\! + \! \beta_2 (\!b(\!v_1,b(\!v_2\!)))).
$$
The equality of these two equations follows since $a$ and $b$ are isomorphisms and in the third coordinate, we use the fact $c$ is an isomorphism along with the fact that $\alpha_1$ and $\alpha_2$ are isotopic and the equation relating $\beta_1$ and $\beta_2$.
\end{proof}

We next see that isoclinisms preserve the generalized nonsingular map coming from an abelian subgroup of maximal possible order.

\begin{lemma}\label{nine}
Let $G_1$ and $G_2$ be s.e.s. groups with $|G_i:G_i'| = p^{2n}$ and $|G_i'| = p^m$ where $p$ is a prime and $m \le n$ are positive integers.  Suppose $A_1$ is an abelian subgroup of $G_1$ of order $p^{n+m}$.  Suppose $(\sigma,\delta)$ is an isoclinism from $G_1$ to $G_2$.  If $A_2/G_2' = \sigma (A_1/G_1')$, then $A_2$ is an abelian subgroup of $G_2$ of order $p^{n+m}$.  Furthermore, $\alpha_{G_1,A_1}$ and $\alpha_{G_2,A_2}$ are isotopic.
\end{lemma}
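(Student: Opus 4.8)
The plan is to first dispatch the structural claims about $A_2$ and then construct an explicit isotopism from the data already available in the isoclinism. Since $G_1$ and $G_2$ are s.e.s. groups, they are special, so $Z(G_i) = G_i'$ and the isoclinism $(\sigma,\delta)$ consists of isomorphisms $\sigma : G_1/G_1' \to G_2/G_2'$ and $\delta : G_1' \to G_2'$ satisfying $[\sigma(g_1 G_1'),\sigma(g_2 G_1')] = \delta([g_1,g_2])$. By definition $A_2$ is the preimage in $G_2$ of $\sigma(A_1/G_1')$, so $A_2 \ge G_2'$ and $|A_2| = |A_1/G_1'|\,|G_2'| = p^n p^m = p^{n+m}$. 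For abelianness I would take $x,y \in A_2$, write $xG_2' = \sigma(a_1 G_1')$ and $yG_2' = \sigma(b_1 G_1')$ with $a_1,b_1 \in A_1$, and apply the isoclinism relation to get $[x,y] = \delta([a_1,b_1]) = \delta(1) = 1$ because $A_1$ is abelian. Hence $A_2$ is an abelian subgroup of order $p^{n+m}$.

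For the isotopism, I would fix the isomorphisms $\delta_i : V \to A_i/G_i'$, $\sigma_i : V \to G_i/A_i$, $\tau_i : G_i' \to W$ that define $\alpha_{G_i,A_i}$ via Corollary \ref{seven}, so that $\alpha_{G_i,A_i}(v,w) = \tau_i([\delta_i(v),\sigma_i(w)])$. Because $\sigma(A_1/G_1') = A_2/G_2'$, the map $\sigma$ restricts to an isomorphism $A_1/G_1' \to A_2/G_2'$ and induces an isomorphism $\bar\sigma : G_1/A_1 \to G_2/A_2$ on the quotients. I would then set $a = \delta_2^{-1}\circ\sigma\circ\delta_1$ and $b = \sigma_2^{-1}\circ\bar\sigma\circ\sigma_1$, which are automorphisms of $V$, and $c = \tau_2\circ\delta\circ\tau_1^{-1}$, an automorphism of $W$, and claim that $(a,b,c)$ is an isotopism from $\alpha_{G_1,A_1}$ to $\alpha_{G_2,A_2}$.

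The verification is a single commutator computation. Evaluating $\alpha_{G_2,A_2}(a(v_1),b(v_2)) = \tau_2([\delta_2(a(v_1)),\sigma_2(b(v_2))])$, the definitions of $a$ and $b$ collapse the two arguments to $\sigma(\delta_1(v_1)) \in A_2/G_2'$ and $\bar\sigma(\sigma_1(v_2)) \in G_2/A_2$. Choosing $a_1 \in A_1$ and $g_1 \in G_1$ with $a_1 G_1' = \delta_1(v_1)$ and $g_1 A_1 = \sigma_1(v_2)$, the bilinear commutator of Lemma \ref{six}(1) identifies this with the commutator in $G_2$ of representatives whose $G_2'$-cosets are $\sigma(a_1 G_1')$ and $\sigma(g_1 G_1')$. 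Since commutators in a class-two group depend only on cosets modulo the center, the isoclinism relation rewrites this as $\delta([a_1,g_1])$, and as $[a_1,g_1] = \tau_1^{-1}(\alpha_{G_1,A_1}(v_1,v_2))$ we obtain $\alpha_{G_2,A_2}(a(v_1),b(v_2)) = \tau_2(\delta(\tau_1^{-1}(\alpha_{G_1,A_1}(v_1,v_2)))) = c(\alpha_{G_1,A_1}(v_1,v_2))$, which is the isotopism condition.

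The step I expect to require the most care is the bookkeeping linking $\bar\sigma : G_1/A_1 \to G_2/A_2$ with $\sigma : G_1/G_1' \to G_2/G_2'$: I must confirm that $\bar\sigma$ is well defined (this is exactly where $\sigma(A_1/G_1') = A_2/G_2'$ is used) and that the commutator $[\delta_2(a(v_1)),\sigma_2(b(v_2))]$ computed through Lemma \ref{six}(1) genuinely matches the commutator of $G_2'$-cosets needed to invoke the isoclinism identity. Once that matching is pinned down, the remainder is the formal composition of isomorphisms, and the fact that $a$, $b$, $c$ are bijective is automatic since each is a composite of isomorphisms.
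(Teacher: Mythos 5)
Your proposal is correct and follows essentially the same route as the paper's proof: abelianness of $A_2$ comes from the isoclinism identity $[\sigma(\overline{a}),\sigma(\overline{b})] = \delta([a,b]) = 1$, and the isotopism comes from transferring the commutator pairing of Lemma \ref{six} through $\sigma$ and $\delta$. The only difference is bookkeeping: the paper pushes the chosen identifications for $G_1$ forward through $(\sigma,\delta)$ so that $\alpha_{G_2,A_2}$ becomes literally equal to $\alpha_{G_1,A_1}$ and then appeals to the fact that $\alpha_{G,A}$ is defined only up to isotopism, whereas you keep both sets of identifications arbitrary and exhibit the isotopism $(a,b,c)$ explicitly, which is, if anything, slightly more self-contained.
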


\begin{proof}
Since $(\sigma,\delta)$ is the isoclinism from $G_1$ to $G_2$, we know that $\sigma: G_1/Z(G_1) \rightarrow G_2/Z(G_2)$ and $\delta: G_1' \rightarrow G_2'$ so that $[\sigma(\overline g), \sigma(\overline h)] = \delta ([g,h])$ for all $g, h \in G_1$.  Notice that if $a, b \in A$, then $[\sigma(\overline a), \sigma (\overline b)] = \delta ([a,b]) = \delta (1) = 1$.  It follows when $A_2/G_2' = \sigma (A_1/G_1')$ that $A_2$ must be an abelian subgroup of $G_2$.

Let $V$ and $W$ be elementary abelian groups of orders $p^n$ and $p^m$ respectively.  Fix isomorphisms $\eta_1:V \rightarrow A_1/Z (G_1)$, $\zeta_1: V \rightarrow G_1/A_1$, and $\kappa_1 : Z(G_1) \rightarrow W$.  Define $\eta_2:V \rightarrow A_2/Z(G_2)$ by $\eta_2 (v) = \sigma (\eta_1 (v))$ for all $v \in V$.  Since $\sigma$ is a homomorphism and maps $A_1$ to $A_2$, it follows that $\sigma$ maps cosets of $A_1$ to cosets of $A_2$.  Thus, we can define $\zeta_2:V \rightarrow G_2/A_2$ by $\zeta_2 (v) = \sigma (\zeta_1 (v))$.  Also, we define $\kappa_2 : Z(G_2) \rightarrow W$ by $\kappa_2 (z_2) = \kappa_1 (\delta^{-1} (z_2))$.  We see that $$\alpha_{G_2,A_2} (v_1,v_2) = \kappa_2 ([\eta_2(v_1),\zeta_2 (v_2)]) = \kappa_1 (\delta^{-1} ([\sigma (\eta_1 (v_1)),\sigma (\zeta_1 (v_2))]) = $$
$$= \kappa_1 (\delta^{-1} (\delta ([\eta_1 (v_1),\zeta_(v_2)]))) = \kappa_1 ([\eta_1 (v_1),\eta_2 (v_2)]) = \alpha_{G_1,A_1} (v_1,v_2).$$
We can now conclude that $\alpha_{G_1,A_1}$ and $\alpha_{G_2,A_2}$ are isotopic.
\end{proof}

We saw in Corollary \ref{threea} that if $\alpha$ is a commutative generalized nonsingular map, then $G(\alpha,0)$ has an abelian subgroup $C$ that satisfies $G = AC = BC$ and $A \cap C = B \cap C = G'$.  We now prove a sort of converse.  Note that this is Theorem \ref{mainf}.

\begin{lemma}\label{ten}
Assume Hypothesis 1 with $\beta = 0$.  Then $G =  G(\alpha)$ has an abelian subgroup $C$ so that $G = AC = BC$ and $A \cap C = B \cap C = G'$ if and only if $\alpha$ is isotopic to an associative generalized nonsingular map.
\end{lemma}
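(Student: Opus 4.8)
The plan is to translate the group-theoretic statement into a concrete condition on $\alpha$ using the complement calculus of Section \ref{complements}, and then to match that condition against the definition of an associative generalized nonsingular map. Throughout, $A$ and $B$ denote the subgroups of $G = G(\alpha) = G(\alpha,0)$ furnished by Theorem \ref{one}; since $\overline\beta = 0$, Theorem \ref{one} (8) guarantees that $B$ is abelian.

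For the forward implication, suppose $C$ is abelian with $G = AC = BC$ and $A \cap C = B \cap C = G'$. From $G = AC$ and $A \cap C = G'$, Lemma \ref{three} (2) produces a unique additive map $f : V \to V$ with $C = B_f = \{(f(v),v,c) \mid v \in V,\ c \in W\}$. Because $\overline\beta = 0$, the commutator formula of Lemma \ref{three} (3) shows that $C$ is abelian if and only if $\alpha(f(v_1),v_2) = \alpha(f(v_2),v_1)$ for all $v_1,v_2 \in V$, that is, $f \in \ker{\phi_\alpha}$. Finally, a direct computation gives $B \cap C = \{(0,v,c) \mid v \in \ker f,\ c \in W\}$, so the condition $B \cap C = G'$ forces $\ker f = 0$; as $V$ is finite, $f$ is bijective (and then $G = BC$ follows by an order count). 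Hence the existence of $C$ is equivalent to the existence of a bijective additive map $f : V \to V$ with $\alpha(f(v_1),v_2) = \alpha(f(v_2),v_1)$ for all $v_1,v_2$.

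It remains to identify this last condition with ``$\alpha$ is isotopic to an associative map.'' Given such an $f$, I set $\alpha'(v_1,v_2) = \alpha(f(v_1),v_2)$; the kernel relation says precisely that $\alpha'$ is symmetric, and $(f^{-1},1_V,1_W)$ is an isotopism from $\alpha$ to $\alpha'$ because $\alpha'(f^{-1}(v_1),v_2) = \alpha(v_1,v_2)$. I would then argue that $\alpha'$ (or a further isotope of it) qualifies as an associative generalized nonsingular map, so that $\alpha$ is isotopic to an associative map. For the converse I would run this in reverse: if $\alpha$ is isotopic to an associative map $\alpha'$, then Lemma \ref{eight} (applied with $\beta_1 = \beta_2 = 0$) yields an isomorphism $G(\alpha) \cong G(\alpha')$ of the form $(u,v,w) \mapsto (a(u),b(v),c(w))$, which carries $A$ onto $A$ and $B$ onto $B$; thus it suffices to produce the common complement inside $G(\alpha')$ and pull it back. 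The associativity of $\alpha'$ should supply a bijective $f' \in \ker{\phi_{\alpha'}}$ — in the symmetric model one simply takes $f' = 1_V$, and then Corollary \ref{threea} already exhibits an abelian subgroup $C' = B_{1_V}$ that lies in $\mathrm{comp}(G(\alpha'),A)$ and complements $B$ as well.

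The main obstacle is exactly this interface with the notion of associativity. On the group side everything is routine: the reduction to a bijective element of $\ker{\phi_\alpha}$ rests only on Lemma \ref{three} and an order count, and the transport of the common complement along an isotopism is handled by Lemma \ref{eight} together with Corollary \ref{threea}. What must be checked with care is that the explicit isotope $\alpha'(v_1,v_2) = \alpha(f(v_1),v_2)$ built from the common complement does satisfy the paper's definition of an associative generalized nonsingular map, and conversely that every associative map contributes a bijective map to the kernel of its associated homomorphism $\phi_{\alpha'}$; pinning down this equivalence against the precise definition of ``associative'' is where the real work lies.
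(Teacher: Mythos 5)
Your proposal is correct and follows essentially the same route as the paper: the converse extracts a bijective additive $f$ with $C = B_f$ via Lemma \ref{three} (bijectivity from $B \cap C = G'$) and forms the symmetric isotope $\alpha_f(v_1,v_2) = \alpha(f(v_1),v_2)$, while the forward direction transports the complement furnished by Corollary \ref{threea} through the isomorphism of Lemma \ref{eight} (your remark that this isomorphism carries $A$ to $A$ and $B$ to $B$ is a point the paper leaves implicit). The ``obstacle'' you flag is illusory: the paper never defines an \emph{associative} generalized nonsingular map, and its own proof --- like Theorem \ref{mainf} and Corollary \ref{threea} --- treats the word as a slip for \emph{symmetric}, producing exactly the symmetric map $\alpha_f$ you construct, so under that reading your argument is complete.
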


\begin{proof}
Suppose $\alpha$ is isotopic to the associative generalized nonsingular map $\alpha_1$.  By Lemma \ref{eight}, we see that $G (\alpha)$ and $G(\alpha_1)$ are isomorphic.  Applying Corollary \ref{threea}, we see that the subgroup $C$ exists as stated.  This proves the desired conclusion.

Conversely, suppose that a group $C$ exists as given in the statement.  By Lemma \ref{three}, there is an additive map $f : V \rightarrow V$ so that $C = B_f$.  Since $C \cap B = G'$, it is not difficult to see that $f (v) \ne 0$ for all $v \in V \setminus \{ 0 \}$.  This implies that $f$ is an isomorphism from $V$ to $V$.  Using Lemma \ref{three}, we see since $C$ is abelian that $\alpha (f(v_1),v_2) = \alpha(f (v_2),v_1)$ for all $v_1, v_2 \in V$.  Define $\alpha_f$ by $\alpha_f (v_1,v_2) = \alpha (f(v_1),v_2)$.  It is not difficult to see that $\alpha_f$ is a symmetric nonsingular map and that $\alpha_f$ is isotopic to $\alpha$.
\end{proof}

Note that Lemmas \ref{five} and \ref{ten} together can be viewed as a generalization of Theorem 3.14 of \cite{ver}, Proposition 4.2 (i) of \cite{hir}, and Lemma 4.3 of \cite{knst1} which proved that a semifield group had at least three abelian subgroups of the maximal possible order if and only if the semifield is isotopic to a commutative semifield.  Combining Lemmas \ref{five} and \ref{ten} with Corollary \ref{threea}, we see that a generalized nonsingular map $\alpha$ is commutative if and only if $\ker{\phi_\alpha} = 1$.

We now look at the relationship between the generalized nonsingular maps arising from different abelian subgroups.



With this in mind, we say that two generalized nonsingular maps $\alpha_1, \alpha_2 : V \times V \rightarrow W$ are {\rm anti-isotopic} if there exist linear isomorphisms $a,b : V \times V$ and $c : W \rightarrow W$ so that $\alpha_2 (b(v_2),a(v_1)) = c (\alpha_1 (v_1,v_2))$ for all $v_1, v_2 \in V$.  Observe that if $G$ is a s.e.s. group with abelian subgroups $A$ and $B$ such that $G = AB$ and $A \cap B = G'$, then $\alpha_{G,A}$ and $\alpha_{G,B}$ are anti-isotopic.  To see this, we take $a$ and $b$ to be the identity maps and $c$ to be the map taking every element to its negative.  Notice that if $\alpha_2$ is anti-isotopic to both $\alpha_1$ and $\alpha_3$, then $\alpha_1$ and $\alpha_3$ are isotopic.   The following should be compared to Proposition 3.2 (2) of \cite{knst1}.

\begin{lemma} \label{twelve}
Suppose $\alpha_1, \alpha_2 : V \times V \rightarrow W$ are generalized nonsingular maps.  If $(a,b,c)$ is an anti-isotopism from $\alpha_1$ to $\alpha_2$, then the map $G(\alpha_1) \rightarrow G(\alpha_2)$ given by $(v,w,z) \mapsto (b(w),a(v),c(\alpha_1 (v,w) - z))$ is an isomorphism.
\end{lemma}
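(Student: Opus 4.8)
The plan is to verify directly that the stated map $\gamma$ is a bijective homomorphism. Since we are dealing with $G(\alpha_i) = G(\alpha_i,0)$, the multiplication in Theorem~\ref{one} simplifies with $\beta = 0$: in $G(\alpha_1)$ we have $(v_1,w_1,z_1)(v_2,w_2,z_2) = (v_1+v_2,\, w_1+w_2,\, z_1+z_2+\alpha_1(v_1,w_2))$, and analogously in $G(\alpha_2)$ with $\alpha_2$ in place of $\alpha_1$. First I would dispose of bijectivity: because $a$, $b$, $c$ are isomorphisms of the underlying vector spaces, a target triple $(x,y,t)$ has a unique preimage given by $w=b^{-1}(x)$, $v=a^{-1}(y)$, and $z=\alpha_1(v,w)-c^{-1}(t)$, so $\gamma$ is a bijection. (Alternatively, both groups have order $p^{2n+m}$, so it is enough to check that $\gamma$ is an injective homomorphism.)

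The substance is the homomorphism check, carried out coordinate by coordinate. Writing $g_1=(v_1,w_1,z_1)$ and $g_2=(v_2,w_2,z_2)$, the first two coordinates of $\gamma(g_1g_2)$ and of $\gamma(g_1)\gamma(g_2)$ agree immediately, being $b(w_1)+b(w_2)$ and $a(v_1)+a(v_2)$ by additivity of $a$ and $b$. For the third coordinate, on the one side I expand $\alpha_1(v_1+v_2,w_1+w_2)$ by bilinearity and subtract $z_1+z_2+\alpha_1(v_1,w_2)$, whereupon the argument of $c$ in $\gamma(g_1g_2)$ reduces to $\alpha_1(v_1,w_1)+\alpha_1(v_2,w_1)+\alpha_1(v_2,w_2)-z_1-z_2$. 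On the other side, computing the product in $G(\alpha_2)$, the third coordinate of $\gamma(g_1)\gamma(g_2)$ is $c(\alpha_1(v_1,w_1)-z_1)+c(\alpha_1(v_2,w_2)-z_2)+\alpha_2(b(w_1),a(v_2))$.

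The step I expect to be the crux is matching the cross term. The definition of $\gamma$ places $w$ into the first $V$-slot and $v$ into the second, so the $\alpha_2$-correction in $G(\alpha_2)$ is evaluated at $(b(w_1),a(v_2))$ rather than at $(a(v_1),b(w_2))$; this transposition of arguments is exactly what the anti-isotopism is designed to absorb. Applying the anti-isotopism identity $\alpha_2(b(v_2'),a(v_1'))=c(\alpha_1(v_1',v_2'))$ with $v_2'=w_1$ and $v_1'=v_2$ gives $\alpha_2(b(w_1),a(v_2))=c(\alpha_1(v_2,w_1))$. Using additivity of $c$ to collect the three $c$-terms then yields precisely $c(\alpha_1(v_1,w_1)+\alpha_1(v_2,w_1)+\alpha_1(v_2,w_2)-z_1-z_2)$, which agrees with the third coordinate of $\gamma(g_1g_2)$ computed above. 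Hence $\gamma$ is a homomorphism, and being bijective, it is the required isomorphism $G(\alpha_1)\cong G(\alpha_2)$.
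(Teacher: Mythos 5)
Your proof is correct and follows essentially the same route as the paper: a direct coordinate-by-coordinate comparison of $\gamma(g_1g_2)$ with $\gamma(g_1)\gamma(g_2)$, expanding $\alpha_1(v_1+v_2,w_1+w_2)$ by bilinearity and using the anti-isotopism identity $\alpha_2(b(w_1),a(v_2))=c(\alpha_1(v_2,w_1))$ to absorb the transposed cross term. Your explicit verification of bijectivity via the inverse formula $z=\alpha_1(v,w)-c^{-1}(t)$ is a small addition the paper leaves implicit, but it does not change the nature of the argument.
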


\begin{proof}
Let $f$ be our map and let $g_1 = (v_1,w_1,z_1)$ and $g_2 = (v_2,w_2,z_2)$.  Then $f (g_1 g_2) = (b(w_1 + w_2),a(v_1 + v_2), c(\alpha_1 (v_1+v_2,w_1+w_2) - (z_1 + z_2 + \alpha_1 (v_1,w_2)))$.  On the other hand, $f (g_1) f(g_2) = (b(w_1)+b(w_2), a(v_1)+a(v_2), c(\alpha_1 (v_1,w_1) - z_1) + c(\alpha_1 (v_2,w_2) - z_2) + \alpha_2 (b(w_1),a(v_2))).$  Notice that $\alpha_1 (v_1+v_2,w_1+w_2) = \alpha_1 (v_1,w_1) + \alpha_1 (v_1,w_2) + \alpha_1 (v_2,w_1) + \alpha (v_2,w_2)$, so that third coordinate of $f (g_1 g_2) = c (\alpha_1 (v_1,w_1)) + c(\alpha_1 (v_2,w_1)) + c(\alpha_1 (v_2,w_2)) - c (z_1) - c(z_2)$.  Since $\alpha_2 (b(w_1),a(v_2)) = c(\alpha_1 (v_2,w_1))$, we see that the third coordinate of $f(g_1) f(g_2)$ is $c (\alpha_1 (v_1,w_1) - c(z_1) + c (\alpha_1 (v_2,w_2) - c(z_2) + c(\alpha_1 (v_2,w_1)$, and we see we have the equality needed for $f$ to be an isomorphism.
\end{proof}

Let $G$ be a s.e.s. group with $|G:G'| = p^{2n}$ and $|G'| = p^m$.  Define $\mathcal{A} (G)$ to be the set of abelian subgroups of $G$ with order $p^{m+n}$.  We attach a graph to $\mathcal {A} (G)$ as follows.  We take $\mathcal {A} (G)$ to be the set of vertices.  We put an edge between $A$ and $B$ if $G = AB$ and $A \cap B = G'$.  It is not difficult to see that if there is an edge between $A$ and $B$, then $\alpha_{G,A}$ and $\alpha_{G,B}$ are anti-isotopic.  It follows that if $A$ and $B$ are the same connected component of this graph, then $\alpha_{G,A}$ and $\alpha_{G,B}$ are either isotopic or anti-isotopic.  Notice that if $m > n/2$, then we know that $\mathcal {A} (G)$ is a complete graph.  With this in mind, the following result which is Theorem \ref{maine} is immediate.

\begin{corollary} \label{thirteen}
Suppose $\alpha_1, \alpha_2 : V \times V \rightarrow W$ are generalized nonsingular maps and that $G(\alpha_1)$ and $G(\alpha_2)$ are isomorphic.  If $\mathcal {A} (G)$ has one connected component, then $\alpha_1$ and $\alpha_2$ are either isotopic or anti-isotopic.  In particular, if $m > n/2$, then $\alpha_1$ and $\alpha_2$ are either isotopic or anti-isotopic.
\end{corollary}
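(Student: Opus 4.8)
The plan is to assemble the pieces already in place: first to realize each $\alpha_i$ as the map $\alpha_{G(\alpha_i),A_i}$ attached to a distinguished abelian subgroup, then to transport one such map across the given isomorphism by means of Lemma \ref{nine}, and finally to invoke the connectedness of the graph on $\mathcal{A}(G)$ to compare the transported subgroup with the other distinguished subgroup. First I would fix, for $i=1,2$, the standard abelian subgroup $A_i = \{(a,0,c) \mid a \in V, c \in W\}$ of $G(\alpha_i)$ furnished by Theorem \ref{one}(4). Using the commutator identity recorded immediately after Lemma \ref{six}, namely $[(a,0,0),(0,b,0)] = (0,0,\alpha_i(a,b))$, together with the natural identifications $V \cong A_i/Z(G(\alpha_i))$, $V \cong G(\alpha_i)/A_i$, and $W \cong Z(G(\alpha_i))$, I would check that the generalized nonsingular map $\alpha_{G(\alpha_i),A_i}$ of Corollary \ref{seven} is in fact equal to $\alpha_i$ for these natural choices; in any event it is isotopic to $\alpha_i$, which is all that will be needed.

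Next, let $\psi : G(\alpha_1) \rightarrow G(\alpha_2)$ be the given isomorphism. Since every isomorphism is an isoclinism, $\psi$ induces an isoclinism $(\sigma,\delta)$ from $G(\alpha_1)$ to $G(\alpha_2)$, and $\psi(A_1)$ is an abelian subgroup of $G(\alpha_2)$ of order $p^{n+m}$ satisfying $\psi(A_1)/G(\alpha_2)' = \sigma(A_1/G(\alpha_1)')$. Lemma \ref{nine} then yields that $\alpha_{G(\alpha_1),A_1}$ and $\alpha_{G(\alpha_2),\psi(A_1)}$ are isotopic, and hence that $\alpha_1$ is isotopic to $\alpha_{G(\alpha_2),\psi(A_1)}$.

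Now both $\psi(A_1)$ and $A_2$ are vertices of the graph $\mathcal{A}(G(\alpha_2))$. Because $G(\alpha_1) \cong G(\alpha_2)$, this graph is isomorphic to $\mathcal{A}(G)$ and so also has a single connected component by hypothesis; thus $\psi(A_1)$ and $A_2$ lie in the same component. By the remarks preceding the corollary (an edge gives anti-isotopic maps, and two maps that are each anti-isotopic to a common map are isotopic to one another), the maps $\alpha_{G(\alpha_2),\psi(A_1)}$ and $\alpha_{G(\alpha_2),A_2} = \alpha_2$ are then either isotopic or anti-isotopic. Chaining the comparisons $\alpha_1 \sim \alpha_{G(\alpha_2),\psi(A_1)} \sim \alpha_2$ and using that isotopism is an equivalence relation, I conclude that $\alpha_1$ and $\alpha_2$ are isotopic or anti-isotopic. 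The ``in particular'' clause is then immediate, since when $m > n/2$ Verardi's result forces $\mathcal{A}(G)$ to be complete, hence connected.

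The proof is essentially a matter of routing through the preceding lemmas, so the only place an error could hide is the composition of relations in the final chain: I must track that a single anti-isotopism anywhere along the chain leaves the overall relation an anti-isotopism while an even number of them returns it to an isotopism, exactly as encoded in the transitivity remarks stated just before the corollary. Keeping that bookkeeping explicit is the main point requiring care.
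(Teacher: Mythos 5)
Your proposal is correct and follows exactly the route the paper intends: the paper declares the corollary ``immediate'' from the preceding graph discussion (edges give anti-isotopisms, connectedness gives isotopic-or-anti-isotopic), with Corollary \ref{seven} identifying each $\alpha_i$ with $\alpha_{G(\alpha_i),A_i}$ and Lemma \ref{nine} transporting the map across the isomorphism viewed as an isoclinism. Your write-up simply makes explicit the steps the paper leaves to the reader, including the parity bookkeeping for composing isotopisms and anti-isotopisms, so there is nothing to correct.
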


We close by noting that Corollary \ref{thirteen} generalizes a similar result for semifield groups that was proved as Theorem 5.1 of \cite{hir} and Theorem 6.6 of \cite{knst1}.

\end{document}